\newcommand{\norm}[1]{\left\lVert#1\right\rVert}
\newcommand{\adef}{\begin{defin}}
\newcommand{\zdef}{\end{defin}}
\newtheorem{defin}{Definition}
\newtheorem{theorem}{Theorem}[section]
\newtheorem{proposition}[theorem]{Proposition}
\numberwithin{equation}{section}
\numberwithin{equation}{section}
\newcommand{\vertiii}[1]{{\left\vert\kern-0.25ex\left\vert\kern-0.25ex\left\vert #1
    \right\vert\kern-0.25ex\right\vert\kern-0.25ex\right\vert}}
\def\block(#1,#2)#3{\multicolumn{#2}{c}{\multirow{#1}{*}{$ #3 $}}}
\DeclarePairedDelimiterX{\inp}[2]{\langle}{\rangle}{#1, #2}
\newcommand{\abs}[1]{\lvert#1\rvert}
\theoremstyle{definition}
\newtheorem{definition}[theorem]{Definition}
\newtheorem{example}[theorem]{Example}
\newcommand{\bbS}{\mathbb{S}}
\theoremstyle{remark}
\newtheorem{remark}[theorem]{Remark}
\newcommand{\R}{\mathbb{R}}
\newcommand{\Ccal}{\mathcal{C}}
\newcommand{\X}{\overline{X}}
\newcommand{\C}{\mathbb{C}}
\newcommand{\Ffrak}{\mathfrak{F}}
\numberwithin{equation}{section}
\DeclareMathOperator{\Real}{\operatorname{Re}}
\author{Félix Cabello Sánchez}
\author{Willian Corrêa}
\title{Interpolation of matrix weighted spaces and commutator estimates}
\thanks{The first author was partially supported by PID2019-103961GB-C21 and Junta de Extremadura IB20038. The second author was supported by São Paulo Research Foundation (FAPESP), grants 2016/25574-8, 2021/13401-0, 2023/06973-2 and 2023/12916-1, and by National Council for Scientific and Technological Development - CNPq - Brazil, Grant 304990/2023-0.}
\begin{document}

\begin{abstract}
We present a formula for the interpolation of matrix weighted spaces of vector valued functions via interpolation functors. We apply our formula to the particular case of interpolation of matrix weighted $L^p$ spaces by the real and complex methods, and present consequences regarding the matrix Muckenhoupt classes and commutator estimates of Calderón-Zygmund operators with matrix $BMO$ functions. In particular, we characterize the logarithms of Muckenhoupt weights through higher order commutators.
\end{abstract}

\maketitle

\section{Introduction}

The celebrated Coifman-Rochberg-Weiss Commutator Theorem of \cite{CRW_Commutator} estates that if $T$ is a Calderón-Zygmund singular integral operator and $b : \R^d \rightarrow \R$ is a $BMO$ function then
\[
\|[b, T] : L^p \rightarrow L^p\| \lesssim_{T, p, d} \|b\|_{BMO}
\]
for every $1 < p < \infty$, where $[b, T] = bT - Tb$. More generally, if $\{(b)^1, T\} = [b, T]$ and $\{(b)^{k+1}, T\} = [b, \{(b)^k, T\}]$ for $k \geq 1$ then
\[
\|\{(b)^k, T\} : L^p \rightarrow L^p\| \lesssim_{T, p, d, k} \|b\|_{BMO}^k
\]

As explained in \cite{Rochberg_Weiss_Derivatives}, one of the proofs of the Commutator Theorem presented in \cite{CRW_Commutator} is essentially an application of the differential process of complex interpolation of weighted $L^p$ spaces.

Given a compatible couple $(X_0, X_1)$ of Banach spaces (for background information, see the next section), complex interpolation associates to each $\theta \in (0, 1)$ a Banach space $X_{\theta}$ satisfying the interpolation property: if $T$ is an operator simultaneously bounded on $X_0$ and on $X_1$ then $T$ is bounded on $X_{\theta}$. Less well known is the fact that complex interpolation also generates commutator estimates through its differential processes: there is a homogeneous map $\Omega_{\theta} : X_{\theta} \rightarrow X_0 + X_1$ such that the commutator $[\Omega_{\theta}, T]$ is bounded on $X_{\theta}$ for every operator $T$ simultaneously bounded on $X_0$ and on $X_1$.

Now the Commutator Theorem comes from finding a compatible couple $(X_0, X_1)$ with $X_{\frac{1}{2}} = L^p$ and such that $\Omega_{\theta}$ is essentially multiplication by $b$. That is done through a connection between $BMO$ functions and Muckenhoupt weights: if $b \in BMO$ then there is $\alpha = O(\frac{1}{\|b\|_{BMO}})$ such that $e^{\pm \alpha b}$ is in the Muckenhoupt class $A_p$ \cite[page 409]{Garcia_Rubio}. Now take $X_0 = L^p(e^{-\alpha b})$ and $X_1 = L^p(e^{\alpha b})$.

The map $\Omega_{\theta}$ appears by taking the derivative of certain analytic functions used in the construction of complex interpolation spaces. By considering higher order derivatives, we can obtain the iterated commutators appearing above.

The goal of this paper is to extend the theory of interpolation to matrix weighted $L^p$ spaces and show the consequences for commutator estimates. More generally, we will present a formula for the interpolation space of certain vector valued matrix weighted spaces for general methods of interpolation. In particular, our results will also be valid for the real method of interpolation.

The study of commutators and iterated commutators in the scalar and matrix setting has been carried out, e.g., in \cite{CARDENAS2022126280, Isralowitz2017matrix, Isralowitz2021sharp, Isralowitz2022commutators, Laukkarinen2023convex, Limani2021sparse, Mair2024bump, Uribe2023new}. We highlight one particular result from \cite{Isralowitz2022commutators}, Theorem 1.1, which ensures the boundedness of $[B, T] : L^p(U) \rightarrow L^p(V)$, where $T$ is a Calderón-Zygmund operator, $U$ and $V$ are matrix $A_p$ weights and $B$ is a matrix function in the space $BMO_{V, U}^p$ defined in \cite{Isralowitz2022commutators}.

In the present paper, we give a formula for the interpolation space $\Ffrak(L^{p_0}(W_0), L^{p_1}(W_1))$, where $\Ffrak$ is any interpolation functor and $W_0$ and $W_1$ are matrix weights. In particular, we show that the complex interpolation space $(L^{p_0}(W_0), L^{p_1}(W_1))_{\theta}$ coincides with $L^{p_{\theta}}(W_{\theta})$ for $\frac{1}{p_{\theta}} = \frac{1-\theta}{p_0} + \frac{\theta}{p_1}$ and a certain matrix weight $W_{\theta}$. We also characterize the real interpolation space in terms of Lorentz and Beurling spaces, and through the differential process of the real and complex methods we present consequences regarding commutator estimates. In particular, we obtain results on iterated commutators with matrix symbols.



One of the results we obtain is the boundedness of the commutator $\{(\log W)^k, T\}$ on $L^2$ if $W$ is a matrix $A_2$ weight. We notice that that result is hidden in the middle of the proof of \cite[Theorem 2.6]{Bloom}, where it is shown that the exponential of a matrix $BMO$ function might not be in matrix $A_2$. Motivated by these results, we present a characterization of the matrix $BMO$ functions which are positive multiples of logarithms of matrix $A_2$ weights (Theorem \ref{thm:charact} below). That is done, again, by exploring the connection between commutator estimates and complex interpolation.

We do not know if interpolation may be used to get results from $L^p(U)$ into $L^p(V)$. We also would like to mention the work \cite{2024arXiv240100398Z} on the topic of interpolation and matrix weights.

\section{Background}\label{sec:background}

\subsection{Interpolation} We refer the reader to \cite{BerghLofstrom} for the following information on interpolation functors and the real and complex methods. A couple $\overline{X} = (X_0, X_1)$ of Banach spaces is said to be \emph{compatible} if we are given a Hausdorff topological vector space $V$ and injective continuous linear maps $j_k : X_k \rightarrow V$, $k = 0, 1$. Notice that every couple $(X_0, X_1)$ is trivially compatible by taking $V = X_0 \oplus X_1$ and $j_k$ as the canonical inclusions, so the nomenclature means that we have fixed a choice of $V$ and $j_k$.

In practice, we replace $V$ by the \emph{sum space}
\[
\Sigma(\overline{X}) = X_0 + X_1 = \{j_0(x_0) + j_1(x_1) : x_k \in X_k, k = 0, 1\}
\]
endowed with the complete norm
\[
\|x\|_{\Sigma(\overline{X})} = \inf\{\|x_0\|_{0} + \|x_1\|_{1} : x = j_0(x_0) + j_1(x_1), x_k \in X_k, k = 0, 1\}
\]
and treat the injective maps as inclusions $X_0, X_1 \subset X_0 + X_1$. Of interest is also the \emph{intersection space} $\Delta(\overline{X}) = X_0 \cap X_1$ on which we consider the complete norm
\[
\|x\|_{\Delta(\overline{X})} = \max \{\|x\|_0, \|x\|_1\}
\]

If $\overline{Y}$ is another compatible couple, we write $T : \overline{X} \rightarrow \overline{Y}$ to indicate that $T : X_0 + X_1 \rightarrow Y_0 + Y_1$ is linear, bounded, and $T|_{X_j} : X_j \rightarrow Y_j$ is bounded, for $j = 0, 1$.

The Banach spaces $X$ and $Y$ are called \emph{interpolation spaces} with respect to $\overline{X}$ and $\overline{Y}$ if
\[
X_0 \cap X_1 \subset X \subset X_0 + X_1
   \quad\text{and}\quad 
Y_0 \cap Y_1 \subset Y \subset Y_0 + Y_1
\]
continuously, and whenever $T : \overline{X} \rightarrow \overline{Y}$ we must have $T : X \rightarrow Y$. If $\overline{X} = \overline{Y}$ and $X = Y$, we say that $X$ is an interpolation space with respect to $\overline{X}$.

Let $\mathcal{C}$ be the category whose objects are compatible couples and which morphisms are the maps $T : \overline{X} \rightarrow \overline{Y}$. Let $\mathcal{B}$ be the category of Banach spaces and bounded linear maps. An \emph{interpolation functor} is a functor $\mathcal{F} : \mathcal{C} \rightarrow \mathcal{B}$ such that
\begin{enumerate}
    \item If $\overline{X}$ and $\overline{Y}$ are compatible couples then $\mathcal{F}(\overline{X})$ and $\mathcal{F}(\overline{Y})$ are interpolation spaces with respect to $\overline{X}$ and $\overline{Y}$;
    \item If $T : \overline{X} \rightarrow \overline{Y}$ then $\mathcal{F}(T) = T : \mathcal{F}(\overline{X}) \rightarrow \mathcal{F}(\overline{Y})$.
\end{enumerate}

Given the couples $\overline{X}$ and $\overline{Y}$, there is $C = C(\overline{X}, \overline{Y})$ such that
\[
\|T : \mathcal{F}(\overline{X}) \rightarrow \mathcal{F}(\overline{Y})\| \leq C \max\{\|T : X_0 \rightarrow Y_0\|, \|T : X_1 \rightarrow Y_1\|\}
\]
If we can always take $C = 1$, then we say that $\mathcal{F}$ is an \emph{exact interpolation functor}.

\subsection{Complex interpolation}
In this section and the next we present the two main interpolation functors. Let $\bbS = \{z \in \C : 0 \leq \Real(z) \leq 1\}$. To the couple $\overline{X}$ we associate the so called \emph{Calderón space} $\Ccal(\X)$ of all continuous bounded functions $f : \bbS \rightarrow X_0 + X_1$ such that $t \mapsto f(k + it) \in X_k$ is continuous and bounded, $k = 0, 1$, and $f$ is analytic on the interior of $\bbS$, endowed with the complete norm
\[
\|f\| = \sup \{\|f(k + it)\|_{k} : t \in \R, k = 0,1\}
\]

For each $\theta \in (0, 1)$ we consider the \emph{complex interpolation space} $X_{\theta} = \{f(\theta) : f \in \Ccal(\X)\}$ with the complete norm
\[
\|x\|_{\theta} = \inf\{\|f\| : f \in \Ccal(\X), f(\theta) = x\}
\]
One important result is that $X_0 \cap X_1$ is dense in $X_{\theta}$.

Let $\overline{Y}$ be another compatible couple. If $T : \X \rightarrow \overline{Y}$ then
\[
\|T : X_{\theta} \rightarrow Y_{\theta}\| \leq \|T : X_0 \rightarrow Y_0\|^{1-\theta} \|T : X_1 \rightarrow Y_1\|^{\theta}
\]
In particular, complex interpolation defines an exact interpolation functor.

A classical example is $(L^{p_0}(w_0), L^{p_1}(w_1))_{\theta} = L^{p_{\theta}}(w_{\theta})$, where $w_0$ and $w_1$ are weights (i.e.\ locally integrable positive functions), $\frac{1}{p_{\theta}} = \frac{1-\theta}{p_0} + \frac{\theta}{p_1}$ and $w_{\theta} = w_0^{1 - \theta} w_1^{\theta}$. Here $L^p(w)$ is the Banach space of (equivalence classes of) measurable functions $f : \R^d \rightarrow \R$ such that
\[
\|f\|_{L^p(w)} = \|w f\|_{L^p} < \infty
\]
More generally, if $X$ is a space of functions and $w$ is a weight we let $\|f\|_{X(w)} = \|wf\|_X$.

\subsection{Real interpolation}

We will present the real method of interpolation through the $K$-functional and the $E$-functionals.
Given a compatible couple $\overline{X} = (X_0, X_1)$ and $x \in X_0 + X_1$ let
\[
K(t, x) = K(t, x, X_0, X_1) = \inf\{\|x_0\|_{X_0} + t\|x_1\|_{X_1} : x = x_0 + x_1\}
\]
and
\[
E_1(t, x) = E_1(t, x, X_0, X_1) = \frac{1}{t} \inf\{\|x - x_1\|_{X_0} : \|x_1\|_{X_1} \leq t\}
\]
For $\alpha > 1$ let
\[
E_{\alpha}(t, x) = E_{\alpha}(t, x, X_0, X_1) = \inf_{x = x_0 + x_1} \max \Big\{ \Big(\frac{\|x_0\|_{X_0}}{t}\Big)^{\frac{1}{\alpha}}, \Big(\frac{\|x_1\|_{X_1}}{t}\Big)^{\frac{1}{\alpha-1}} \Big\}
\]
If $f : (0, \infty) \rightarrow [0, \infty)$, $\theta \in (0, 1)$, $1 \leq q < \infty$, let
\[
\Phi_{\theta, q}(f) = \Big(\int_0^{\infty} (t^{-\theta} f(t))^q \frac{dt}{t}\Big)^{\frac{1}{q}}
\]
If $q = \infty$, let
\[
\Phi_{\theta, \infty}(f) = \sup_{t > 0} t^{-\theta} f(t)
\]
For $x \in X_0 + X_1$ let
\[
\|x\|_{\theta, q} = \Phi_{\theta, q}(K(t, x))
\]
and
\[
\|x\|_{\theta, q, \alpha} = \Big(\Phi_{(\theta - \alpha)^{-1}, q(\alpha - \theta)} (E_{\alpha}(t, x))\Big)^{\alpha - \theta}
\]
It follows that fixed $\theta$ and $q$, the norms $\|\cdot\|_{\theta, q, \alpha}$ are all equivalent to $\|\cdot\|_{\theta, q}$. The \emph{real interpolation space} $X_{\theta, q}$ is defined as
\[
X_{\theta, q} = \{x \in X_0 + X_1 : \|x\|_{\theta, q} < \infty\}
\]



Let $\overline{Y}$ be another compatible couple. If $T : \X \rightarrow \overline{Y}$ then
\[
\|T : X_{\theta, q} \rightarrow Y_{\theta, q}\| \leq \|T : X_0 \rightarrow Y_0\|^{1-\theta} \|T : X_1 \rightarrow Y_1\|^{\theta}
\]
In particular, real interpolation defines an exact interpolation functor. To describe the space $(L^{p_0}(w_0), L^{p_1}(w_1))_{\theta, q}$ we will need the definition of \emph{Beurling spaces} from \cite{Beurling}.

Given $0 < \theta < 1$ and $\gamma \in \mathbb{R}$, let
\[
\Omega(\theta, \gamma) = \{\varphi \in L^1\Big((0, \infty), \frac{dt}{t}\Big) : \|\varphi\| = 1, \varphi \geq 0 \mbox{ and } t^{\theta} \varphi(t)^{\gamma} \mbox{ is increasing}\}
\]
For $1 \leq p < \infty$ and $1 \leq q \leq \infty$ let $\gamma = \frac{1}{p} - \frac{1}{q}$. If $w$ is a weight the Beurling spaces are defined as:
\[
B_{\theta, q}^p(w) = \bigcup\limits_{\varphi \in \Omega(\theta, \gamma)} \{L^p(\eta) : \eta = w^{\theta} (\varphi \circ w)^{\gamma}\}
\]
if $\gamma \leq 0$, with
\[
\|f\|_{B_{\theta, q}^p(w)} = \inf_{\varphi \in \Omega(\theta, \gamma)} \|f w^{\theta} (\varphi \circ w)^{\gamma}\|_p
\]
and if $\gamma \geq 0$ we let
\[
B_{\theta, q}^p(w) = \bigcap\limits_{\varphi \in \Omega(\theta, \gamma)} \{L^p(\eta) : \eta = w^{\theta} (\varphi \circ w)^{\gamma}\}
\]
with
\[
\|f\|_{B_{\theta, q}^p(w)} = \sup_{\varphi \in \Omega(\theta, \gamma)} \|f w^{\theta} (\varphi \circ w)^{\gamma}\|_p
\]

Let $\lambda$ denote Lebesgue measure. If $\eta$ is a weight, we let $L^{p, q}(\eta d\lambda)$ be the Lorentz space with respect to the measure $\eta d\lambda$. Notice that $L^{p, q}(\eta d\lambda) \neq L^{p, q}(\eta)$.






\begin{theorem}[\cite{Freitag, Gilbert}]\label{thm:real_interpolation_classical_Lp}
Let $1 \leq p_0, p_1 < \infty$, $0 < \theta < 1$ and $1 \leq q \leq \infty$. Let $w_0$ and $w_1$ be scalar weights. We have the following equalities with equivalence of norms:
\begin{enumerate}

    \item[(a)] If $p_0 < p_1$ and $\frac{1}{p_{\theta}} = \frac{1 - \theta}{p_0} + \frac{\theta}{p_1}$ then
    \[
    (L^{p_0}(w_0), L^{p_1}(w_1))_{\theta, q} = L^{p_{\theta}, q}\Big(\Big(\frac{w_0}{w_1}\Big)^{\frac{p_0p_1}{p_1 - p_0}} d\lambda\Big)\Big(\Big(\frac{w_1^{p_1}}{w_0^{p_0}}\Big)^{\frac{1}{p_1 - p_0}}\Big)
    \]

    In particular, if $\frac{1}{q} = \frac{1 - \theta}{p_0} + \frac{\theta}{p_1} = \frac{1}{p_{\theta}}$ then
    \[
    (L^{p_0}(w_0), L^{p_1}(w_1))_{\theta, q} = L^q(w_0^{1 - \theta} w_1^{\theta})
    \]

    \item[(b)] If $p_0 = p_1 = p < \infty$ we have
    \[
    (L^{p}(w_0), L^{p}(w_1))_{\theta, q} = B_{\theta, q}^p(w_0^{-1} w_1)(w_0)
    \]
    In particular, if $q = p$ then
    \[
    (L^{p}(w_0), L^{p}(w_1))_{\theta, q} = L^p(w_0^{1 - \theta} w_1^{\theta})
    \]
\end{enumerate}
\end{theorem}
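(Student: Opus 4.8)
The plan is to reduce both parts to a single computation of a real‑interpolation functional for a couple of two weighted $L^p$ spaces and then to evaluate that functional explicitly, following the classical route of Freitag and Gilbert. The common first move is to observe that for any two weights the multiplication operator $M_{w_0}\colon f\mapsto w_0 f$ is, by the definition of the weighted norms, an isometry of $L^{p_0}(w_0)$ onto $L^{p_0}$ and of $L^{p_1}(w_1)$ onto $L^{p_1}(v)$, where $v=w_1w_0^{-1}$; hence it is an isomorphism of compatible couples
\[
(L^{p_0}(w_0),L^{p_1}(w_1))\;\cong\;(L^{p_0},L^{p_1}(v)).
\]
By the interpolation property it therefore suffices to identify $(L^{p_0},L^{p_1}(v))_{\theta,q}$ and then transport the answer back through $M_{w_0}^{-1}$, collecting the powers of $w_0$ and $w_1$; this last bookkeeping is exactly what converts $v^{\theta}$ into $w_0^{1-\theta}w_1^{\theta}$ in the diagonal cases and produces the exponents $\frac{p_0p_1}{p_1-p_0}$ and $\frac{1}{p_1-p_0}$ in part (a).

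For part (a), with $p_0<p_1$, I would use the $E_\alpha$-functional of Section~\ref{sec:background} for the choice $\alpha=\frac{p_1}{p_1-p_0}$ (so $\alpha-1=\frac{p_0}{p_1-p_0}$), which tunes the exponents $1/\alpha$ and $1/(\alpha-1)$ to $p_1$ and $p_0$. Since the $L^p$ spaces form a Calderón couple, the extremal decomposition in $E_\alpha(t,g)$ is governed, up to constants, by a pointwise truncation of $g$ at a level depending on $t$ and $v$, which yields a closed form for $E_\alpha(t,g)$ involving $g$ and $v$ only. Substituting this into the functional $\Phi_{(\theta-\alpha)^{-1},\,q(\alpha-\theta)}$, applying Fubini, and transporting back by $M_{w_0}^{-1}$, one recognises the resulting quantity (with equivalent norms) as the quasinorm of the weighted Lorentz space displayed in (a). The sub‑case $\frac1q=\frac1{p_\theta}$ then follows from $L^{p_\theta,p_\theta}(\mu)=L^{p_\theta}(\mu)$ together with the elementary identity $W^{p_\theta}\rho=(w_0^{1-\theta}w_1^{\theta})^{p_\theta}$, where $\rho$ is the density of the measure and $W$ the outer multiplier occurring in (a).

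For part (b), with $p_0=p_1=p$, I would use the $K$-functional of $(L^p,L^p(v))$ instead. Since these are two lattices over the same measure space and with the same exponent, a short argument based on Minkowski's inequality gives, with constants depending only on $p$,
\[
K(t,g;\,L^p,L^p(v))\;\approx\;\Big(\int |g(x)|^{p}\,\min\big(1,\,t\,v(x)\big)^{p}\,d\lambda(x)\Big)^{1/p}.
\]
Substituting this into $\|g\|_{\theta,q}=\Phi_{\theta,q}(K(\cdot,g))$ and rewriting the inner integral in $t$ over the level sets of $v$ turns the identification into an extremal problem over the Beurling class $\Omega(\theta,\gamma)$, $\gamma=\frac1p-\frac1q$: the admissible $\varphi$ parametrise the optimal way of distributing the parameter $t$ relative to $v$, and the monotonicity condition built into $\Omega(\theta,\gamma)$ is precisely what guarantees that the extremum is attained, so that $\Phi_{\theta,q}(K(\cdot,g))$ is equivalent to $\|g\|_{B_{\theta,q}^{p}(v)}$; transporting back by $M_{w_0}^{-1}$ yields the stated $B_{\theta,q}^{p}(w_0^{-1}w_1)(w_0)$. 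When $q=p$ one has $\gamma=0$, so $\Omega(\theta,0)$ imposes nothing beyond $\varphi\ge 0$ and $\|\varphi\|_{L^1(dt/t)}=1$ while $(\varphi\circ v)^{0}\equiv 1$; hence $B_{\theta,p}^{p}(v)=L^p(v^{\theta})$. Alternatively, this sub‑case drops straight out of the displayed formula via $\int_0^{\infty}t^{-\theta p}\min(1,tv)^{p}\,\frac{dt}{t}=C_{p,\theta}\,v^{\theta p}$, which gives $(L^p(w_0),L^p(w_1))_{\theta,p}=L^p(w_0^{1-\theta}w_1^{\theta})$.

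The step I expect to be the main obstacle, common to both parts, is producing the closed pointwise/rearrangement form of the relevant functional ($E_\alpha$ in (a), $K$ in (b)) for a genuinely weighted couple while keeping track of all the powers of $v$ — this is where the Calderón‑couple structure of the $L^p$ scale does the real work. Once that is in place, part (a) is the routine (if notationally heavy) matter of recognising a $\Phi$-integral as a Lorentz quasinorm for the twisted measure, and part (b) reduces to the extremisation over $\Omega(\theta,\gamma)$, which is the least routine point and is in essence the content of Beurling's paper.
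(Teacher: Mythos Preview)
The paper does not prove this theorem at all: it is stated as background and attributed to \cite{Freitag, Gilbert}, with no argument given. So there is nothing in the paper to compare your proposal against.

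That said, your sketch is a reasonable outline of the Freitag--Gilbert argument. The reduction via $M_{w_0}$ to the couple $(L^{p_0},L^{p_1}(v))$ with $v=w_1/w_0$ is exactly the standard first step, and your choice of the $E_\alpha$-functional with $\alpha=p_1/(p_1-p_0)$ in (a) and the $K$-functional in (b) is correct. The pointwise formula $K(t,g)\approx\big(\int|g|^p\min(1,tv)^p\,d\lambda\big)^{1/p}$ in (b) is standard and the computation for $q=p$ is clean. The one place where your write-up is genuinely thin is the identification of $\Phi_{\theta,q}(K(\cdot,g))$ with the Beurling norm $\|g\|_{B^p_{\theta,q}(v)}$ for general $q$: you say ``the admissible $\varphi$ parametrise the optimal way of distributing the parameter $t$ relative to $v$'' and invoke monotonicity, but you have not actually shown either inequality, and this is precisely the nontrivial content of Gilbert's paper. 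If you intend this to be a proof rather than a pointer to the literature, that step needs to be written out; otherwise, citing \cite{Gilbert} as the paper does is the honest move.
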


Notice that the previous theorem also covers the case $p_0 > p_1$, since $(X_0, X_1)_{\theta, q} = (X_1, X_0)_{1 - \theta, q}$.


\subsection{The differential process}\label{sec:differential_process}
Considerably less well known is the differential process associated to interpolation. For many interpolation functors it is the case that the interpolation process induces a sequence of homogeneous map $\Omega^{(n)} : \mathcal{F}(\overline{X}) \rightarrow X_0 + X_1$ giving rise to commutator estimates. The maps $\Omega^{(n)}$ are called the \emph{derivations} associated to the couple $\overline{X}$.

Let $\Psi^{(n)}$ be the derivations associated to another couple $\overline{Y}$. If $T : \overline{X} \rightarrow \overline{Y}$, let $[\Psi^{(1)}, T, \Omega^{(1)}] = \Psi^{(1)} T - T \Omega^{(1)}$.
The commutator theorem affirms that
\[
\|[\Psi^{(1)}, T, \Omega^{(1)}] : \mathcal{F}(\overline{X}) \rightarrow \mathcal{F}(\overline{Y})\| \leq C \max\{\|T : X_0 \rightarrow Y_0\|, \|T : X_1 \rightarrow Y_1\|\}
\]
for a constant $C$ independent of $T$ (and, usually, independent of $\overline{X}$ and $\overline{Y}$ as well, as in the case of complex and real interpolation below).

For the higher order maps of the differential process, for simplicity we suppose that $\overline{X} = \overline{Y}$ and notice that 
\[
[\Psi^{(1)}, T, \Omega^{(1)}] = [\Omega^{(1)}, T] = \Omega^{(1)}T - T \Omega^{(1)}
\]
Let $C_1(T) = [T, \Omega^{(1)}]$ and define inductively
\begin{equation}\label{eq:Cn(T)}
C_n(T) = [T, \Omega^{(n)}] - \sum\limits_{k=1}^{n-1} \Omega^{(n-k)} C_k(T)
\end{equation}
Then
\[
\|C_n(T) : \mathcal{F}(\overline{X}) \rightarrow \mathcal{F}(\overline{X})\| \leq C_{n} \max\{\|T :X_0 \rightarrow X_0\|, \|T : X_1 \rightarrow X_1\|\}
\]
for a constant $C_n$ independent of $T$ (and, usually, independent of $\overline{X}$ as well. Again, the typical examples are complex and real interpolation).

\subsubsection{The differential process of complex interpolation}
Taking into account that each function $f \in \Ccal(\X)$ is analytic, it is expected that its derivatives give valuable information. Indeed, let $B_{\theta} : X_{\theta} \rightarrow \Ccal(\X)$ be a homogeneous map such that for every $x \in X_{\theta}$ we have $B_{\theta}(x)(\theta) = x$ and $\|B_{\theta}(x)\| \leq (1 + \epsilon) \|x\|_{\theta}$, where $\epsilon > 0$ is independent of $x$ (usually we can replace $1 + \epsilon$ by $1$). That is, $B$ is a bounded homogeneous section for the map $\delta_{\theta} : f \mapsto f(\theta)$.

Given $\theta \in (0, 1)$, the derivations $\Omega^{(n)} = \Omega_{\theta, n}$ are given by
\[
\Omega_{\theta, n}(x) = \frac{1}{n!} B_{\theta}(x)^{(n)}(\theta)
\]
We usually denote $\Omega_{\theta, 1}$ by $\Omega_{\theta}$. In the case of $(L^{p_0}(w_0), L^{p_1}(w_1))_{\theta} = L^{p_{\theta}}(w_{\theta})$, if $p_0 = p_1 = p$ then $\Omega_{\theta}$ is multiplication by $\log \frac{w_0}{w_1}$, and more generally
\[
\Omega_{\theta, n}(f) = \frac{1}{n!} \log^n \Big(\frac{w_0}{w_1}\Big) \cdot f
\]
where $\log^n(t) = (\log(t))^n$. It follows that
\[
C_n(T) = \Big\{\Big(\log \frac{w_1}{w_0}\Big)^n, T\Big\}
\]
up to a multiplicative constant.

\subsubsection{The differential process of real interpolation}

Let us start describing the differential process associated to the $K$-functional. Let $\epsilon > 0$, and for $t > 0$ let $D_K(t) : X_0 + X_1 \rightarrow X_0 + X_1$ be a homogeneous map such that
\[
K(t, a) \leq \max\{\|D_K(t)(a)\|_{X_0}, \|a - D_K(t)(a)\|_{X_1}\} \leq (1 + \epsilon) K(t, a)
\]
for every $a \in X_0 + X_1$. We call $D_K$ a \emph{selector} for $(X_0, X_1)$ with respect to $K$. The derivation of the $K$-method is given by
\[
\Omega_{K}^{(n)}(a) = (-1)^{n-1} \Big(\int_0^1 \frac{\log^{n-1}(t)}{(n-1)!} D_K(t)a \frac{dt}{t} - \int_1^{\infty} \frac{\log^{n-1}(t)}{(n-1)!} (\mbox{Id} - D_K(t))a \frac{dt}{t}\Big)
\]

Now we describe the derivation associated to the $E$-functionals. Let $\epsilon > 0$, and for $t > 0$ let $D_{E_1}(t) : X_0 + X_1 \rightarrow X_0 + X_1$ be a homogeneous map such that
\[
E_{1}(t, a) \leq \frac{\|D_{E_1}(t, a)\|_{X_0}}{t} \leq E_1(t/(1 + \epsilon), a)
\]
and $\|a - D_{E_1}(t, a)\|_{X_1} \leq t$.

For $\alpha > 1$, we take a homogeneous map $D_{E_{\alpha}}(t)$ on $X_0 + X_1$ such that
\[
E_{\alpha}(t, a) \leq \max \Big\{ \Big(\frac{\|D_{E_{\alpha}}(t, a)\|_{X_0}}{t}\Big)^{\frac{1}{\alpha}}, \Big(\frac{\|a - D_{E_{\alpha}}(t, a)\|_{X_1}}{t}\Big)^{\frac{1}{\alpha-1}} \Big\} \leq E_{\alpha}(t/(1 + \epsilon), a)
\]

We call $D_{E_{\alpha}}$ a \emph{selector} for $(X_0, X_1)$ with respect to $E_{\alpha}$. The derivation of the $E$-method is given by
\[
\Omega_{E_{\alpha}}^{(n)}(a) = -\int_0^1 \frac{\log^{n-1}(t)}{(n-1)!} (\mbox{Id} - D_{E_{\alpha}}(t))(a) \frac{dt}{t} + \int_1^{\infty} \frac{\log^{n-1}(t)}{(n-1)!} D_{E_{\alpha}}(t)a \frac{dt}{t}
\]

The following examples, at least for $n = 1$, may be found in \cite{JRW} (check also \cite{Milman1995higher}). If $p_0 = p_1 = p$ and we consider the couple $(L^p(w_0), L^p(w_1))$ then, up to a multiplicative constant, the derivation of the $K$-functional is the same as the derivation of the complex method. On the other hand,
\[
\Omega_{E_1}^{(n)}(a)(x) = \frac{a(x)}{n!} \Big(\log^n \Big[\frac{w_1(x)}{w_0(x)} K\Big(\frac{w_0(x)}{w_1(x)}, a\Big)\Big] + \log^n K(1, a) \Big)
\]



Now, if $p_0 < p_1$ and $\alpha = \frac{p_1}{p_1 - p_0}$ then
\[
\Omega_{E_{\alpha}}^{(n)}(a)(x) = \frac{a(x)}{n!} \log^n \Big[ \left|a(x)\right| \Big(\frac{w_1^{p_1}(x)}{w_0^{p_0}(x)}\Big)^{\frac{1}{p_1 - p_0}}\Big]
\]
so that, up to a multiplicative constant,
\[
C_n(T) = \{\Big( a(x) \log \Big[ \left|a(x)\right| \Big(\frac{w_1^{p_1}(x)}{w_0^{p_0}(x)}\Big)^{\frac{1}{p_1 - p_0}}\Big] \Big)^n , T \}
\]

For further information on interpolation and the differential process, we refer the reader to \cite{BerghLofstrom, Carro1995, CKMR}.



\subsection{Matrix Muckenhoupt weights}
Let $1 < p < \infty$. We say that $W : \R^d \rightarrow M_n(\C)$ is in the \emph{matrix Muckenhoupt class $A_p$} if $W$ is locally integrable, a.e. positive definite, and
\[
[W]_{A_p} \coloneqq \sup_Q \fint_Q \Big(\fint_Q \|W^{\frac{1}{p}}(x) W^{-\frac{1}{p}}(y)\|^q dy \Big)^{\frac{p}{q}} dx < \infty
\]
where $\fint_Q = \frac{1}{\abs{Q}} \int$ and the supremum is over all cubes with sides parallel to the axes. We let $\tilde{L}^p(W)$ be the space of all functions $\Vec{f} : \R^d \rightarrow \C^n$ for which
\[
\|\Vec{f}\|_{\tilde{L}^p(W)}^p =  \int_{\R^d} \|W^{\frac{1}{p}}(x) \Vec{f}(x)\|_2^p dx < \infty
\]

If $T$ is a Calderón-Zygmund operator, then \cite{Cruz-Uribe2018}
\[
\|T : \tilde{L}^p(W) \rightarrow \tilde{L}^p(W)\| \leq C_{T, d, p} [W]_{A_p}^{1 + \frac{1}{p-1} - \frac{1}{p}}
\]
The matrix $A_2$ conjecture asks whether in the case $p = 2$ we can replace the power $\frac{3}{2}$ by a linear estimate. We also notice that usually the space $\Tilde{L}^p(W)$ is denoted $L^p(W)$.



\section{Interpolation of matrix weighted spaces}

In this section we present our general result on interpolation of matrix weighted spaces. We begin by defining the kind of spaces that will be weighted.
\begin{definition}
Let $(\Omega, \Sigma, \mu)$ be a measure space and $X$ be a Banach space of vector valued measurable functions $\Vec{f} : \Omega \rightarrow \C^n$. We say that $X$ is \emph{unitarily admissible} on $\Omega$ if
\begin{enumerate}
    \item The inclusion $X \subset L^0(\Omega; \C^n)$ is continuous, where we consider on $L^0(\Omega; \C^n)$ convergence in measure with respect to sets of finite measure.

    \item If $U : \Omega \rightarrow M_n(\C)$ is a measurable function such that $U(x)$ is unitary for a.e.\ $x \in \Omega$ then for any measurable $\Vec{f}$ we have $\|\Vec{f}\|_X = \|U \Vec{f}\|_X$.
\end{enumerate}
We make the convention that $\|\vec{f}\|_X = \infty$ if $\vec{f} \not \in X$.
\end{definition}

The natural example is $L^p(\Omega; \C^n)$. More generally, if $X$ is a K\"othe function space on $\Omega$ and $n \geq 2$ then
\[
\Vec{X}^n = \{\Vec{f} \in L^0(\Omega; \C^n) : \|\Vec{f}\|_{\Vec{X}^n} = \|x \mapsto \|\Vec{f}(x)\|_2\|_X < \infty\}
\]
is a unitarily admissible space.

\begin{definition}
Let $(\Omega, \Sigma, \mu)$ be a measure space. A function $W : \Omega \rightarrow M_n(\C)$ is called a \emph{generalized matrix weight} if $W(x)$ is an isomorphism a.e. We say that $W$ is a \emph{matrix weight} if $W(x)$ is positive definite a.e.
\end{definition}

\begin{definition}
Let $X$ be a unitarily admissible space on $\Omega$ and let $W : \Omega \rightarrow M_n(\C)$ be a generalized matrix weight. We let
\[
X(W) = \{\Vec{f} \in L^0(\Omega; \C^n) : \|\vec{f}\|_{X(W)} = \|W\Vec{f}\|_X < \infty\}
\]
\end{definition}
One easily checks that $X(W) = X(\abs{W})$, so that in principle we could restrict our attention to matrix weights. The use of generalized matrix weights avoids some cumbersome notation later.

If $X_0$ and $X_1$ are unitarily admissible spaces of $\C^n$-valued functions on $\Omega$ and $W_0$ and $W_1$ are generalized matrix weights, then it is clear that the couple $(X_0(W_0), X_1(W_1))$ is compatible when each space is seen as a subspace of $L^0(\Omega; \C^n)$. Let us recall that if $W$ is a matrix weight then there is a measurable function $U : \Omega \rightarrow M_n$ which is a.e.\ unitary such that $D = U^{-1} W U$ is diagonal \cite[Lemma 3.1]{Cruz-Uribe2016}.

\begin{theorem}\label{thm:general_interpolation}
Let $X_0$ and $X_1$ be unitarily admissible spaces on a measure space $(\Omega, \Sigma, \mu)$, $W_0, W_1 : \Omega \rightarrow M_n$ be matrix weights, and $\Ffrak$ be an interpolation functor. Let
\[
\abs{W_1 W_0^{-1}} = U D U^{-1}
\]
where $U$ is unitary a.e. and $D$ is diagonal a.e. Then
\[
\Ffrak(X_0(W_0), X_1(W_1)) = \Ffrak(X_0, X_1(D))(U^{-1} W_0)
\]
with equivalence of norms. If $\Ffrak$ is exact, then there is equality of norms.
\end{theorem}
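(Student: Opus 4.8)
The plan is to reduce the statement to a change of variables at the level of the underlying unitarily admissible spaces, exploiting the functoriality of $\Ffrak$ together with the invariance property built into the definition of unitary admissibility. The key observation is that the map ``multiplication by a fixed a.e.-invertible matrix function'' intertwines the two couples in question, and that an intertwining isomorphism of couples is sent by any interpolation functor to an isomorphism of the resulting Banach spaces, with norm control governed by the norms on the endpoints.

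First I would set up notation. Write $M = W_1 W_0^{-1}$, so that $|M| = U D U^{-1}$ with $U$ unitary a.e.\ and $D$ diagonal and positive a.e. Since $W_1 = M W_0$ and $X(V) = X(|V|)$ for any generalized matrix weight $V$, I can replace $W_1$ by $|M| W_0 = U D U^{-1} W_0$ without changing $X_1(W_1)$; so it suffices to treat the couple $(X_0(W_0), X_1(UDU^{-1}W_0))$. Now consider the linear map $\Phi : \vec{f} \mapsto U^{-1} W_0 \vec{f}$ acting on $L^0(\Omega;\C^n)$. This $\Phi$ is a bijection of $L^0(\Omega;\C^n)$ with inverse $\vec{g}\mapsto W_0^{-1} U \vec{g}$ (both are multiplication by a.e.-invertible matrix functions, hence well defined and continuous for convergence in measure on finite-measure sets). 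I claim $\Phi$ is an isometric isomorphism of couples
\[
\Phi : (X_0(W_0),\, X_1(UDU^{-1}W_0)) \longrightarrow (X_0,\, X_1(D)).
\]
Indeed, for the first endpoint, $\|\Phi \vec{f}\|_{X_0} = \|U^{-1} W_0 \vec f\|_{X_0} = \|W_0 \vec f\|_{X_0} = \|\vec f\|_{X_0(W_0)}$, where the middle equality uses that $X_0$ is unitarily admissible and $U^{-1}$ is a.e.\ unitary. For the second endpoint, $\|\Phi \vec f\|_{X_1(D)} = \|D U^{-1} W_0 \vec f\|_{X_1}$; on the other hand $\|\vec f\|_{X_1(UDU^{-1}W_0)} = \|UDU^{-1}W_0 \vec f\|_{X_1} = \|DU^{-1}W_0\vec f\|_{X_1}$, again by unitary admissibility of $X_1$ applied to the a.e.-unitary factor $U$ on the left. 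Hence $\Phi$ is an isometry between the two first spaces and between the two second spaces; in particular $\Phi$ and $\Phi^{-1}$ are morphisms in $\Ccal$ with all four relevant operator norms equal to $1$.

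Applying the interpolation functor $\Ffrak$ and its functoriality (condition (2) in the definition of interpolation functor, together with the uniform bound $\|T:\Ffrak(\overline X)\to\Ffrak(\overline Y)\|\le C\max\{\|T:X_0\to Y_0\|,\|T:X_1\to Y_1\|\}$), we get that $\Phi$ induces an isomorphism
\[
\Phi : \Ffrak(X_0(W_0),\, X_1(UDU^{-1}W_0)) \longrightarrow \Ffrak(X_0,\, X_1(D))
\]
with $\|\Phi\|\le C$ and $\|\Phi^{-1}\|\le C$, where $C = C(\overline X,\overline Y)$; if $\Ffrak$ is exact, $C=1$ and $\Phi$ is an isometric isomorphism. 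Finally, note that $\Ffrak(X_0,X_1(D))(U^{-1}W_0)$ is by definition the space $\{\vec f : U^{-1}W_0\vec f \in \Ffrak(X_0,X_1(D))\}$ with norm $\|\vec f\|\mapsto \|U^{-1}W_0\vec f\|_{\Ffrak(X_0,X_1(D))} = \|\Phi\vec f\|_{\Ffrak(X_0,X_1(D))}$, which is exactly the pullback of the target norm under $\Phi$. Combining this with the displayed isomorphism and with $X_1(W_1) = X_1(UDU^{-1}W_0)$ gives $\Ffrak(X_0(W_0),X_1(W_1)) = \Ffrak(X_0,X_1(D))(U^{-1}W_0)$ with equivalence of norms, and equality of norms when $\Ffrak$ is exact.

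**Main obstacle.** The only genuinely delicate points are bookkeeping rather than substance: one must make sure that multiplication by $U^{-1}W_0$ (and its inverse $W_0^{-1}U$) really does map $L^0(\Omega;\C^n)$ continuously to itself for convergence in measure on finite-measure sets, so that $\Phi$ is a legitimate morphism of compatible couples and the ambient spaces $\Sigma$ and $\Delta$ match up; and one must be careful that the a.e.\ factorization $|W_1W_0^{-1}| = UDU^{-1}$ with measurable $U,D$ is available --- this is exactly the content of \cite[Lemma 3.1]{Cruz-Uribe2016} applied to the matrix weight $|W_1W_0^{-1}|$. Once these measurability and continuity points are dispatched, the rest is the formal transport-of-structure argument above. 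I would expect the write-up to spend most of its length verifying that $X(W) = X(|W|)$ is used correctly and that the a.e.-unitary multiplication indeed preserves each unitarily admissible norm, since those two facts carry the entire proof.
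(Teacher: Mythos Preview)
Your proposal is correct and follows essentially the same route as the paper: both arguments observe that multiplication by an a.e.-invertible matrix function gives an isometric equivalence of couples, strip off $W_0$, use unitary admissibility to pass from $W_1W_0^{-1}$ to $|W_1W_0^{-1}|=UDU^{-1}$ and then remove the unitary conjugation, and finally invoke functoriality of $\Ffrak$. The only cosmetic difference is that the paper carries this out as a two-step chain $T_{W_0}$ followed by $T_{U^{-1}}$, whereas you compose them into a single map $\Phi=T_{U^{-1}W_0}$; one small imprecision is that your appeal to ``$X(V)=X(|V|)$'' to replace $W_1=MW_0$ by $|M|W_0$ is not literally that identity (since $|MW_0|\neq|M|W_0$ in general) but rather the polar decomposition $M=U_M|M|$ together with unitary admissibility applied to $U_M$ on the left---which is exactly what the paper uses at the step $X_1(W_1W_0^{-1})=X_1(|W_1W_0^{-1}|)$.
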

\begin{proof}
Let $W$ be a generalized matrix weight and $X$ be a unitarily admissible space on $\Omega$. Notice that the map $T_W : \Vec{f} \mapsto W\Vec{f}$ is a linear isometry from $X(W)$ onto $X$. We have, therefore, the following chain of equivalence of couples:
\begin{eqnarray*}
(X_0(W_0), X_1(W_1)) & {\overset{T_{W_0}}{\equiv}} & (X_0, X_1(W_1 W_0^{-1})) \\
    & = & (X_0, X_1(\abs{W_1 W_0^{-1}})) \\
    & = & (X_0(U U^{-1}), X_1(U D U^{-1})) \\
    & = & (X_0(U^{-1}), X_1(D U^{-1})) \\
    & {\overset{T_{U^{-1}}}{\equiv}} & (X_0, X_1(D))
\end{eqnarray*}

Therefore
\begin{eqnarray*}
\Ffrak(X_0(W_0), X_1(W_1)) & = & T_{W_0}^{-1} \circ T_{U^{-1}}^{-1} (\Ffrak(X_0, X_1(D))) \\
    & = & T_{W_0^{-1} U} (\Ffrak(X_0, X_1(D))) \\
    & = & \Ffrak(X_0, X_1(D))(U^{-1} W_0)
\end{eqnarray*}
\end{proof}

\begin{remark}
The inspiration for the previous proof was the observation that if $T_0$ and $T_1$ are positive operators on $\C^n$ and we consider the Hilbert spaces $H^{T_j}$ given by the inner products
\[
\inp{x}{y}_{T_j} = \inp{T_j x}{T_j y}
\]
then
\[
(H^{T_0}, H^{T_1})_{\theta} = H^{\abs{T_1 T_0^{-1}}^{\theta} T_0}
\]
Indeed, the proof is exactly the same, and comes from deforming the spheres of $H^{T_j}$ so that they have the same axes, interpolating, and then undoing the deformation.
\end{remark}

\section{Complex and real interpolation of matrix weighted $L^p$ spaces}

We have seen in the previous section that interpolating $(X_0(W_0), X_1(W_1))$ reduces to interpolating $(X_0, X_1(D))$, where $D$ is a diagonal matrix weight. In this section we consider the case of interpolation of matrix weighted $L^p$ spaces for the complex and the real method, in view of the applications for Muckenhoup matrix weights.

\subsection{Complex interpolation}


We notice the following proposition, which may be obtained as in \cite[Remark 8.29]{Pisier_Martingales}
\begin{proposition}
Let $1 \leq p_0, p_1 < \infty$, $0 < \theta < 1$ and $D$ be a diagonal matrix weight. Then
\[
(L^{p_0}, L^{p_1}(D))_{\theta} = L^{p_{\theta}}(D^{\theta})
\]
isometrically, where $\frac{1}{p_{\theta}} = \frac{1 - \theta}{p_0} + \frac{\theta}{p_1}$.
\end{proposition}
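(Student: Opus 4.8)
The plan is to establish the two norm inequalities $\|\vec f\|_{(L^{p_0},L^{p_1}(D))_\theta}\le\|\vec f\|_{L^{p_\theta}(D^\theta)}$ and $\|\vec f\|_{L^{p_\theta}(D^\theta)}\le\|\vec f\|_{(L^{p_0},L^{p_1}(D))_\theta}$ separately, which together give the isometric identification. I would use the classical Stein--Weiss method of analytic families of functions, adapted to the diagonal vector-valued situation. Write $D=\operatorname{diag}(d_1,\dots,d_n)$ with each $d_i>0$ a scalar weight, and for $z\in\bbS$ put $\frac1{p(z)}=\frac{1-z}{p_0}+\frac z{p_1}$ and $\frac1{p'(z)}=\frac{1-z}{p_0'}+\frac z{p_1'}$, so $p(\theta)=p_\theta$ and $p'(\theta)=p_\theta'$. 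The one observation that drives everything is that, for real $t$, the diagonal matrix $D(x)^{it}=\operatorname{diag}(d_i(x)^{it})$ has unimodular entries and is therefore unitary; hence $\|D(x)^{it}\vec a\|_2=\|\vec a\|_2$ for all $\vec a\in\C^n$, while $D(x)^{1+it}=D(x)\,D(x)^{it}$, so that on the line $\operatorname{Re}z=1$ the analytic weight $D^z$ contributes precisely the factor $D$ that occurs in the norm of $L^{p_1}(D)$, and $\operatorname{Re}\frac1{p(k+it)}=\frac1{p_k}$ for $k=0,1$.

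For the first inequality, by homogeneity assume $\|\vec f\|_{L^{p_\theta}(D^\theta)}=1$, put $g=\|D^\theta\vec f\|_2$ and $\vec v=D^\theta\vec f/g$ on $\{g>0\}$ (and $\vec v=0$ otherwise), so $\|g\|_{p_\theta}=1$, $\|\vec v(x)\|_2\le1$, and $\vec f=g\,D^{-\theta}\vec v$. Set
\[
F(z)=g^{\,p_\theta/p(z)}\,D^{-z}\vec v .
\]
Then $F(\theta)=\vec f$, and since $D^{-it}$ is unitary, $\|F(it)(x)\|_2=g(x)^{p_\theta/p_0}$ and $\|D(x)F(1+it)(x)\|_2=g(x)^{p_\theta/p_1}$; hence $\|F(it)\|_{L^{p_0}}=\|g\|_{p_\theta}^{p_\theta/p_0}=1$ and $\|F(1+it)\|_{L^{p_1}(D)}=\|g\|_{p_\theta}^{p_\theta/p_1}=1$, so $F\in\Ccal(\overline{X})$ with $\|F\|\le1$ and $\|\vec f\|_{(L^{p_0},L^{p_1}(D))_\theta}\le1$.

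For the reverse inequality I would use duality. Let $F\in\Ccal(\overline{X})$ with $F(\theta)=\vec f$. Since $L^{p_\theta}(\Omega;\C^n)^\ast=L^{p_\theta'}(\Omega;\C^n)$ under the bilinear pairing $\langle\vec a,\vec b\rangle=\sum_i a_i b_i$, it suffices to bound $|\langle D^\theta\vec f,\vec g\rangle|$ by $\|F\|$ for every simple $\vec g$ with $\|\vec g\|_{L^{p_\theta'}(\Omega;\C^n)}\le1$. Writing $h=\|\vec g\|_2$, $\vec w=\vec g/h$, set
\[
G(z)=h^{\,p_\theta'/p'(z)}\,D^{z}\vec w ,\qquad \Phi(z)=\int_\Omega\langle F(z)(x),G(z)(x)\rangle\,dx .
\]
Then $G(\theta)=D^\theta\vec g$, so $\Phi(\theta)=\langle\vec f,D^\theta\vec g\rangle=\langle D^\theta\vec f,\vec g\rangle$ (as $d_i^\theta$ is real), while $\Phi$ is bounded and holomorphic on $\bbS$. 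On the boundary, Cauchy--Schwarz in $\C^n$ together with the unitarity of $D^{it}$, the identity $D^{1+it}=D\,D^{it}$, and Hölder's inequality give $|\Phi(it)|\le\|F(it)\|_{L^{p_0}}\,\|h\|_{p_\theta'}^{p_\theta'/p_0'}\le\|F\|$ and $|\Phi(1+it)|\le\|F(1+it)\|_{L^{p_1}(D)}\,\|h\|_{p_\theta'}^{p_\theta'/p_1'}\le\|F\|$, so the maximum principle on $\bbS$ yields $|\Phi(\theta)|\le\|F\|$. Taking the supremum over $\vec g$ and the infimum over $F$ gives $\|D^\theta\vec f\|_{p_\theta}\le\|\vec f\|_{(L^{p_0},L^{p_1}(D))_\theta}$.

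The step requiring genuine care — the main obstacle — is that $D$ is merely a matrix weight, so the $d_i$ may be unbounded above and may vanish in the limit; then $g^{p_\theta/p(z)}D^{-z}\vec v$ and $h^{p_\theta'/p'(z)}D^z\vec w$ need not lie in the relevant spaces for all $z$, and $\Phi$ may fail to be finite, so the formal computations above must be legitimized. I would do this by localization. For the first inequality, reduce by dominated convergence in $L^{p_\theta}(D^\theta)$ to the dense class of bounded $\vec f$ supported on a set of finite measure on which every $d_i$ is bounded above and below; on such $\vec f$ the function $F$ is a bounded $\C^n$-valued function of finite-measure support depending holomorphically on $z$, hence genuinely lies in $\Ccal(\overline{X})$. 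For the second, write $\|D^\theta\vec f\|_{p_\theta}=\sup_S\|(D^\theta\vec f)\mathbf 1_S\|_{p_\theta}$ over such sets $S$ and test only against $\vec g$ supported in $S$, which makes $G$ bounded with finite-measure support and $\Phi$ honestly bounded and holomorphic. Since both $L^{p_\theta}(D^\theta)$ and $(L^{p_0},L^{p_1}(D))_\theta$ embed continuously into $L^0(\Omega;\C^n)$, these localized estimates pass to the limit and pin down the spaces together with their norms; this is the mechanism underlying \cite[Remark 8.29]{Pisier_Martingales}.
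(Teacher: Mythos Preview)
Your argument is correct and is precisely the Stein--Weiss construction that the paper has in mind: the paper gives no proof of its own, merely pointing to \cite[Remark~8.29]{Pisier_Martingales}, and what you have written is exactly that argument specialized to diagonal weights, exploiting that $D^{it}$ is unitary. The analytic family $F(z)=g^{p_\theta/p(z)}D^{-z}\vec v$ you build is the standard extremal, and the duality step with $G(z)=h^{p_\theta'/p'(z)}D^{z}\vec w$ together with the three-lines lemma is the usual way to get the reverse inequality; your localization remarks at the end are the right way to make the formal computation honest.
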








\begin{theorem}
Let $1 \leq p_0, p_1 < \infty$, $0 < \theta < 1$ and $W_0$ and $W_1$ be matrix weights. Then
\[
(L^{p_0}(W_0), L^{p_1}(W_1))_{\theta} = L^{p_{\theta}}(\abs{W_1 W_0^{-1}}^{\theta} W_0)
\]
isometrically, where $\frac{1}{p_{\theta}} = \frac{1 - \theta}{p_0} + \frac{\theta}{p_1}$
\end{theorem}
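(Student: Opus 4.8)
The plan is to combine Theorem~\ref{thm:general_interpolation}, specialized to the complex method, with the preceding Proposition on diagonal weights. Since complex interpolation is an exact interpolation functor, taking $X_0 = L^{p_0}$ and $X_1 = L^{p_1}$ (the usual spaces of $\C^n$-valued $L^p$ functions, which are unitarily admissible) and $\Ffrak = (\,\cdot\,)_\theta$ in Theorem~\ref{thm:general_interpolation} yields, with equality of norms,
\[
(L^{p_0}(W_0), L^{p_1}(W_1))_\theta = (L^{p_0}, L^{p_1}(D))_\theta(U^{-1} W_0),
\]
where $\abs{W_1 W_0^{-1}} = U D U^{-1}$ with $U$ a.e.\ unitary and $D$ a.e.\ diagonal. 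Here $D$ is genuinely a (diagonal) matrix weight: as $W_0$ and $W_1$ are positive definite a.e., $W_1 W_0^{-1}$ is an isomorphism a.e., hence $\abs{W_1 W_0^{-1}}$ is positive definite a.e.\ and its eigenvalue function $D$ has strictly positive diagonal entries a.e.

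Next I would apply the preceding Proposition to the couple $(L^{p_0}, L^{p_1}(D))$, obtaining $(L^{p_0}, L^{p_1}(D))_\theta = L^{p_\theta}(D^\theta)$ isometrically, with $\tfrac{1}{p_\theta} = \tfrac{1-\theta}{p_0} + \tfrac{\theta}{p_1}$. Substituting and unwinding the definition of the weighted space, the $(L^{p_0}(W_0), L^{p_1}(W_1))_\theta$-norm of $\vec f$ equals $\norm{U^{-1} W_0 \vec f}_{L^{p_\theta}(D^\theta)} = \norm{D^\theta U^{-1} W_0 \vec f}_{L^{p_\theta}}$. It then remains to rewrite this without reference to $U$: since conjugation by $U$ commutes with raising to the power $\theta$ (and $D^\theta$ is the entrywise $\theta$-power of the a.e.-positive diagonal $D$), we have $\abs{W_1 W_0^{-1}}^\theta = U D^\theta U^{-1}$, whence $\abs{W_1 W_0^{-1}}^\theta W_0 \vec f = U\bigl(D^\theta U^{-1} W_0 \vec f\bigr)$. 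Because $L^{p_\theta} = L^{p_\theta}(\Omega;\C^n)$ is unitarily admissible and $U$ is a.e.\ unitary, left multiplication by $U$ preserves the $L^{p_\theta}$-norm, so $\norm{D^\theta U^{-1} W_0 \vec f}_{L^{p_\theta}} = \norm{\abs{W_1 W_0^{-1}}^\theta W_0 \vec f}_{L^{p_\theta}} = \norm{\vec f}_{L^{p_\theta}(\abs{W_1 W_0^{-1}}^\theta W_0)}$, which is the asserted isometric identity.

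The argument is essentially bookkeeping on top of the two results above; the only step requiring real care is the last one, where the unitary $U$ produced by the diagonalization of $\abs{W_1 W_0^{-1}}$ must disappear. This is precisely where unitary admissibility of the ambient space $L^{p_\theta}$ is used, and it is also the reason the conclusion is an isometry of weighted spaces rather than a pointwise identity of weights: the two weights $D^\theta U^{-1} W_0$ and $\abs{W_1 W_0^{-1}}^\theta W_0$ differ by the left unitary factor $U$. An alternative would be to bypass Theorem~\ref{thm:general_interpolation} and replay its deformation argument, via the isometries $T_{W_0}$ and $T_{U^{-1}}$, directly for the complex functor; but reusing the general theorem is cleaner and keeps the present proof short.
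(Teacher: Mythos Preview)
Your proof is correct and follows essentially the same route as the paper: apply Theorem~\ref{thm:general_interpolation} (using exactness of the complex method), invoke the diagonal Proposition to get $L^{p_\theta}(D^\theta)$, and then remove the unitary $U$ via unitary admissibility. The paper records this last step as the bare equality $L^{p_\theta}(D^\theta U^{-1}W_0)=L^{p_\theta}(\abs{W_1W_0^{-1}}^\theta W_0)$, which is exactly what you spell out.
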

\begin{proof}
Let $\abs{W_1 W_0^{-1}} = U D U^{-1}$ where $U$ is unitary and $D$ is diagonal a.e. We have
\begin{eqnarray*}
(L^{p_0}(W_0), L^{p_1}(W_1))_{\theta} & = & L^{p_{\theta}}(D^{\theta})(U^{-1} W_0) \\
    & = & L^{p_{\theta}}(D^{\theta} U^{-1} W_0) \\
    & = & L^{p_{\theta}}(\abs{W_1 W_0^{-1}}^{\theta} W_0)
\end{eqnarray*}
\end{proof}

Let us see some examples.

\subsubsection{The commutative case} If $W_0$ and $W_1$ commute a.e., we have
\[
\abs{W_1 W_0^{-1}}^{\theta} W_0 = W_0^{1 - \theta} W_1^{\theta}
\]
so that
\[
(L^{p_0}(W_0), L^{p_1}(W_1))_{\theta} = L^p(W_0^{1 - \theta} W_1^{\theta})
\]

\subsubsection{The quasi-commutative case}

Two matrices $A$ and $B$ quasi-commute if $[A, [A, B]] = [B, [A, B]] = 0$. We will say that two matrix weights $W_0$ and $W_1$ quasi-commute if $W_0(x)$ and $W_1(x)$ quasi-commute a.e. By \cite[Theorem 3]{Roth_1936}, $\log W_0$ and $\log W_1$ quasi-commute a.e. Therefore, using the Baker-Campbell-Hausdorff formula, we get
\begin{eqnarray*}
W_1 W_0^{-1} & = & e^{\log W_1} e^{-\log W_0} \\
            & = & e^{\log W_1 - \log W_0 + \frac{1}{2} [\log W_0, \log W_1]}
\end{eqnarray*}
while
\begin{eqnarray*}
W_0^{-1} W_1 & = & e^{-\log W_0} e^{\log W_1} \\
            & = & e^{\log W_1 - \log W_0 - \frac{1}{2} [\log W_0, \log W_1]}
\end{eqnarray*}

By another application of the Baker-Campbell-Hausdorff formula,
\begin{eqnarray*}
\abs{W_1 W_0^{-1}}^2 & = & W_1 W_0^{-1} W_0^{-1} W_1 \\
        & = & e^{\log W_1 - \log W_0 + \frac{1}{2} [\log W_0, \log W_1]} e^{\log W_1 - \log W_0 - \frac{1}{2} [\log W_0, \log W_1]} \\
        & = & e^{2(\log W_1 - \log W_0)}
\end{eqnarray*}
So $\abs{W_1 W_0^{-1}} = e^{\log W_1 - \log W_0}$. Then:
\begin{eqnarray*}
\abs{W_1 W_0^{-1}}^{\theta} W_0 & = & e^{\theta \log W_1 - \theta \log W_0} e^{\log W_0} \\
        & = & e^{\theta \log W_1 + (1 - \theta) \log W_0 + \frac{\theta}{2}[\log W_1, \log W_0]}
\end{eqnarray*}

Therefore
\[
(L^{p_0}(W_0), L^{p_1}(W_1))_{\theta} = L^{p_{\theta}}(e^{\theta \log W_1 + (1 - \theta) \log W_0 + \frac{\theta}{2}[\log W_1, \log W_0]})
\]
Notice, however, that since $[\log W_1, \log W_0]$ is skew-Hermitian, for any $\alpha \in \R$ the matrix $e^{\alpha [\log W_1, \log W_0]}$ is an isometry. It follows that
\[
(L^{p_0}(W_0), L^{p_1}(W_1))_{\theta} = L^{p_{\theta}}(e^{\theta \log W_1 + (1 - \theta) \log W_0 + \alpha[\log W_1, \log W_0]})
\]
for any $\alpha \in \R$. In particular:
\[
(L^{p_0}(W_0), L^{p_1}(W_1))_{\theta} = L^{p_{\theta}}(W_0^{1-\theta}W_1^{\theta}) = L^{p_{\theta}}(W_1^{\theta} W_0^{1-\theta})
\]
That suggests that $(L^{p_0}(W_0), L^{p_1}(W_1))_{\theta}$ might coincide with $L^{p_{\theta}}(W_0^{1-\theta}W_1^{\theta})$ whenever $W_0$ and $W_1$ are $k$-commutative a.e. (see \cite{Roth_1936} for the definition).

\subsection{Real interpolation}

\begin{definition}
Let $1 \leq p_0, p_1 < \infty$, $0 < \theta < 1$ and $1 \leq q \leq \infty$. Let $D = \mbox{diag}(w_j)_{j=1}^n$ be a diagonal matrix weight. Let $\frac{1}{p_{\theta}} = \frac{1 - \theta}{p_0} + \frac{\theta}{p_1}$.

\begin{enumerate}
    \item[(a)] If $p_0 < p_1$ we let
        \[
        L(D; p_0; p_1; q; \theta) = L^{p_{\theta}, q}(w_1^{-\frac{p_0 p_1}{p_1 - p_0}} d\lambda)(w_1^{\frac{p_1}{p_1 - p_0}}) \oplus \dots \oplus L^{p_{\theta}, q}(w_n^{-\frac{p_0 p_1}{p_1 - p_0}} d\lambda)(w_n^{\frac{p_1}{p_1 - p_0}})
        \]

    \item[(b)] If $p_0 = p_1$ we let
        \[
        L(D; p_0; p_1; q; \theta) = B_{\theta, q}^{p}(w_1) \oplus \dots \oplus B_{\theta, q}^{p}(w_n)
        \]

        In particular, if $q = p_{\theta}$ then
        \[
        L(D; p_0; p_1; q; \theta) = L^q(D^{\theta})
        \]
\end{enumerate}

The next result is a direct consequence of the previous definition, Theorem \ref{thm:real_interpolation_classical_Lp} and Theorem \ref{thm:general_interpolation}.
\begin{theorem}
Let $1 \leq p_0, p_1 < \infty$, $0 < \theta < 1$ and $0 < q < \infty$. Let $W_0, W_1$ be two matrix weights and $\abs{W_1 W_0^{-1}} = U D U^{-1}$, where $U$ is unitary and $D$ is diagonal a.e. Then
\[
(L^{p_0}(W_0), L^{p_1}(W_1))_{\theta, q} = L(D; p_0; p_1; q; \theta)(U^{-1} W_0)
\]
with equivalence of norms. In particular, if $\frac{1}{q} = \frac{1 - \theta}{p_0} + \frac{\theta}{p_1}$ then
\[
(L^{p_0}(W_0), L^{p_1}(W_1))_{\theta, q} = L^{q}(\abs{W_1 W_0^{-1}}^{\theta} W_0)
\]
with equivalence of norms.
\end{theorem}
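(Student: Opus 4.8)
The plan is to chain together the three results just cited. We may assume $p_0\le p_1$ (the case $p_0>p_1$ reduces to it via $(X_0,X_1)_{\theta,q}=(X_1,X_0)_{1-\theta,q}$). First I would apply Theorem~\ref{thm:general_interpolation} with the (exact) interpolation functor $\Ffrak=(\,\cdot\,,\,\cdot\,)_{\theta,q}$ and the unitarily admissible spaces $X_0=L^{p_0}$, $X_1=L^{p_1}$ of $\C^n$-valued functions carrying the Euclidean norm in the fibre; since $\abs{W_1 W_0^{-1}}=UDU^{-1}$ this gives, isometrically,
\[
(L^{p_0}(W_0),L^{p_1}(W_1))_{\theta,q}=(L^{p_0},L^{p_1}(D))_{\theta,q}(U^{-1}W_0),
\]
so the task reduces to identifying $(L^{p_0},L^{p_1}(D))_{\theta,q}$ with $L(D;p_0;p_1;q;\theta)$ and then twisting back by $U^{-1}W_0$.

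The heart of the argument is to use that $D=\mathrm{diag}(w_j)_{j=1}^n$ is diagonal in order to split the vector couple into the $n$ scalar couples $(L^{p_0},L^{p_1}(w_j))$. Identifying a $\C^n$-valued function with its coordinate tuple $(f_1,\dots,f_n)$, both $L^{p_0}$ and $L^{p_1}(D)$ are $L^p$-spaces of $\ell^2_n$-valued functions ($D$ being a diagonal rescaling of the fibre in the second case); since $n$ is fixed, Minkowski's inequality in $L^{p/2}$ or $L^{2/p}$ makes them equivalent, with constants depending only on $n$, to the $\ell^2$-direct sums $\bigoplus_{j=1}^n L^{p_0}$ and $\bigoplus_{j=1}^n L^{p_1}(w_j)$. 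By functoriality of the real method, equivalent couples have uniformly equivalent real interpolation spaces; and a short computation with the $K$-functional --- using the triangle inequality in $\ell^2_n$ and the fact that the coordinates may be split independently --- shows that the $K$-functional of an $\ell^2$-direct sum of couples is equivalent, up to a factor $2$, to the $\ell^2$-combination of the coordinate $K$-functionals, after which $\Phi_{\theta,q}$ together with one more use of Minkowski's inequality gives
\[
(L^{p_0},L^{p_1}(D))_{\theta,q}\ \simeq\ \bigoplus_{j=1}^n(L^{p_0},L^{p_1}(w_j))_{\theta,q}
\]
with $n$-dependent constants. I expect this decoupling to be the one genuinely delicate point: because $L^{p}$ of $\C^n$-valued functions is $L^p(\ell^2_n)$ rather than an $\ell^q$-sum of scalar $L^p$'s, the equivalence of the two couples --- although elementary once $n$ is fixed --- has to be made explicit, and it is precisely the diagonality of $D$ that renders both factor couples scalar.

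To finish, I would evaluate each scalar factor by Theorem~\ref{thm:real_interpolation_classical_Lp} applied to the weights $1$ and $w_j$: when $p_0<p_1$, part~(a) gives $L^{p_\theta,q}\big(w_j^{-p_0p_1/(p_1-p_0)}\,d\lambda\big)\big(w_j^{\,p_1/(p_1-p_0)}\big)$, and when $p_0=p_1$, part~(b) gives $B^p_{\theta,q}(w_j)$; the $\ell^2$-direct sum of these factors is, by its very definition, $L(D;p_0;p_1;q;\theta)$, and twisting back by $U^{-1}W_0$ as in the first step yields the general identity. For the particular case $\tfrac1q=\tfrac{1-\theta}{p_0}+\tfrac{\theta}{p_1}$ I would instead invoke the sharp instances of Theorem~\ref{thm:real_interpolation_classical_Lp} ($q=p_\theta$ in~(a), $q=p$ in~(b)), for which each scalar factor collapses to $L^{q}(w_j^{\,\theta})$, so that the direct sum is equivalent to the matrix weighted space $L^{q}(D^{\theta})$; hence
\[
(L^{p_0}(W_0),L^{p_1}(W_1))_{\theta,q}=L^{q}(D^{\theta})(U^{-1}W_0)=L^{q}(\abs{W_1 W_0^{-1}}^{\theta}W_0),
\]
the last equality because $\abs{W_1 W_0^{-1}}^{\theta}=UD^{\theta}U^{-1}$ and $U(x)$ is unitary for a.e.\ $x$, so $\|UD^{\theta}U^{-1}W_0\vec{f}(x)\|_2=\|D^{\theta}U^{-1}W_0\vec{f}(x)\|_2$ a.e.
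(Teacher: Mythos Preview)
Your proposal is correct and follows exactly the route the paper indicates: the paper states the theorem as ``a direct consequence of the previous definition, Theorem~\ref{thm:real_interpolation_classical_Lp} and Theorem~\ref{thm:general_interpolation}'' and gives no further argument, so your write-up is precisely the intended proof spelled out. The one step you flag as delicate --- passing from the vector couple $(L^{p_0},L^{p_1}(D))$ with $\ell^2_n$-fibres to the direct sum $\bigoplus_{j=1}^n(L^{p_0},L^{p_1}(w_j))$ --- is indeed implicit in the paper's definition of $L(D;p_0;p_1;q;\theta)$ as a coordinatewise direct sum, and your justification (finite $n$, equivalence of $\ell^2_n$ and $\ell^p_n$ norms, splitting the $K$-functional) is the natural way to fill it in.
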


\end{definition}

\section{Applications}

In this section we apply the results on complex and real interpolation of matrix weighted $L^p$ spaces to the study of commutator estimates with Calderón-Zygmund singular operators. That is done through the differential process of each interpolation method.

\subsection{Commutator estimates obtained through complex interpolation}
Let $1 \leq p_0, p_1 < \infty$, and let $W_0, W_1$ be matrix weights. Consider the couple $(L^{p_0}(W_0), L^{p_1}(W_1))$. Given $\Vec{f} \in L^{p_{\theta}}(\abs{W_1W_0^{-1}}^{\theta} W_0)$ let $\|\Vec{f}\|_{\theta} = \|\Vec{f}\|_{L^{p_{\theta}}(\abs{W_1W_0^{-1}}^{\theta} W_0)}$. One may check that for $\vec{f}$ in a dense subspace of $L^{p_{\theta}}(\abs{W_1W_0^{-1}}^{\theta} W_0)$ we may take
\[
B_{\theta}(\Vec{f}) = \norm{\abs{W_1 W_0^{-1}}^{\theta} W_0 \frac{\Vec{f}}{\|\Vec{f}\|_{\theta}}}^{p_{\theta}\Big(\frac{1}{p_1} - \frac{1}{p_0}\Big)(z-\theta)} W_0^{-1} \abs{W_1 W_0^{-1}}^{-z} \abs{W_1 W_0^{-1}}^{\theta} W_0 \Vec{f}
\]
Therefore
\[
\Omega_{\theta}(\Vec{f}) = (p_{\theta} \Big(\frac{1}{p_1} - \frac{1}{p_0}\Big) \log \norm{\abs{W_1 W_0^{-1}}^{\theta} W_0 \frac{\Vec{f}}{\|\Vec{f}\|_{\theta}}} Id - W_0^{-1} (\log \abs{W_1 W_0^{-1}}) W_0) \Vec{f}
\]

Recall that in the context of matrix Muckenhoupt weights it is more common to work with the space $\Tilde{L}^p(W) = L^p(W^{\frac{1}{p}})$. The corresponding result is:
\begin{theorem}
Let $W_0, W_1 : \R^d \rightarrow M_n^+(\C)$ be matrix weights. Then:
\[
(\tilde{L}^{p_0}(W_0), \tilde{L}^{p_1}(W_1))_{\theta} = \tilde{L}^{p_{\theta}}(W_{\theta})
\]
where $\frac{1}{p_{\theta}} = \frac{1-\theta}{p_0} + \frac{\theta}{p_1}$ and $W_{\theta} = \abs{\abs{W_1^{\frac{1}{p_1}} W_0^{-\frac{1}{p_0}}}^{\theta} W_0^{\frac{1}{p_0}}}^{p_\theta}$. The derivation is:
\[
\Omega_{\theta}(\Vec{f}) = (p_{\theta} \Big(\frac{1}{p_1} - \frac{1}{p_0}\Big) \log \norm{\abs{W_1^{\frac{1}{p_1}} W_0^{-\frac{1}{p_0}}}^{\theta} W_0^{\frac{1}{p_0}} \frac{\Vec{f}}{\|\Vec{f}\|_{\theta}}} Id - W_0^{-\frac{1}{p_0}} (\log \abs{W_1^{\frac{1}{p_1}} W_0^{-\frac{1}{p_0}}}) W_0^{\frac{1}{p_0}}) \Vec{f}
\]

If $p_0 = p_1$ then
\[
\Omega_{\theta}(\Vec{f}) = - W_0^{-\frac{1}{p_0}} (\log \abs{W_1^{\frac{1}{p_1}} W_0^{-\frac{1}{p_0}}}) W_0^{\frac{1}{p_0}} \Vec{f}
\]

If $W_0$ and $W_1$ commute then
\[
\Omega_{\theta}(\Vec{f}) = (p_{\theta} \Big(\frac{1}{p_1} - \frac{1}{p_0}\Big) \log \norm{W_0^{\frac{1-\theta}{p_0}} W_1^{\frac{\theta}{p_1}} \frac{\Vec{f}}{\|\Vec{f}\|_{\theta}}} Id - \log \abs{W_1^{\frac{1}{p_1}} W_0^{-\frac{1}{p_0}}}) \Vec{f}
\]

If $W_0$ and $W_1$ commute and $p_0 = p_1 = p$ then
\[
\Omega_{\theta}(\Vec{f}) = - \frac{1}{p} (\log W_1 W_0^{-1}) \Vec{f}
\]

For any linear map which is simultaneously bounded on $\tilde{L}^{p_0}(W_0)$ and on $\tilde{L}^{p_1}(W_1)$ we have
\[
\|[\Omega_{\theta}, T] : \tilde{L}^{p_{\theta}}(W_{\theta}) \rightarrow \tilde{L}^{p_{\theta}}(W_{\theta})\| \leq C_{\theta} \max\{\|T : \tilde{L}^{p_0}(W_0) \rightarrow \tilde{L}^{p_0}(W_0)\|, \|T : \tilde{L}^{p_1}(W_1) \rightarrow \tilde{L}^{p_1}(W_1)\|\}
\]
where $C_{\theta}$ depends only on $\theta$. More generally, if we let $C_m(T)$ be as in Section \ref{sec:differential_process} then
\[
\|C_m(T) : \tilde{L}^{p_{\theta}}(W_{\theta}) \rightarrow \tilde{L}^{p_{\theta}}(W_{\theta})\| \leq C_{\theta, m} \max\{\|T : \tilde{L}^{p_0}(W_0) \rightarrow \tilde{L}^{p_0}(W_0)\|, \|T : \tilde{L}^{p_1}(W_1) \rightarrow \tilde{L}^{p_1}(W_1)\|\}
\]
where $C_{\theta, m}$ depends only on $\theta$ and $m$.

In particular, if $W_0 \in A_{p_0}$ and $W_1 \in A_{p_1}$ then $W_{\theta} \in A_{p_{\theta}}$, and if $T$ is a Calderón-Zygmund operator then
\[
\|[\Omega_{\theta}, T] : \tilde{L}^{p_{\theta}}(W_{\theta}) \rightarrow \tilde{L}^{p_{\theta}}(W_{\theta})\| \leq C_{\theta} \max\{\|T : \tilde{L}^{p_0}(W_0) \rightarrow \tilde{L}^{p_0}(W_0)\|, \|T : \tilde{L}^{p_1}(W_1) \rightarrow \tilde{L}^{p_1}(W_1)\|\}
\]
\end{theorem}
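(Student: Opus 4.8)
The plan is to reduce every assertion to facts already in hand: the complex interpolation formula $(L^{p_0}(W_0),L^{p_1}(W_1))_\theta=L^{p_\theta}(\abs{W_1W_0^{-1}}^\theta W_0)$ proved above together with the explicit section $B_\theta$ and derivation $\Omega_\theta$ for that couple, the abstract commutator bounds of Section~\ref{sec:differential_process}, and the boundedness of Calderón--Zygmund operators on $\tilde{L}^p(W)$ for $W\in A_p$ from \cite{Cruz-Uribe2018}. For the interpolation identity, note that $\tilde{L}^{p_j}(W_j)=L^{p_j}(W_j^{1/p_j})$ by definition, so I would apply that formula with $W_0$ replaced by the matrix weight $W_0^{1/p_0}$ and $W_1$ by $W_1^{1/p_1}$, obtaining
\[
(\tilde{L}^{p_0}(W_0),\tilde{L}^{p_1}(W_1))_\theta = L^{p_\theta}\big(\abs{W_1^{1/p_1}W_0^{-1/p_0}}^\theta W_0^{1/p_0}\big)
\]
isometrically, since complex interpolation is exact. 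Writing $V=\abs{W_1^{1/p_1}W_0^{-1/p_0}}^\theta W_0^{1/p_0}$ and using $L^{p_\theta}(V)=L^{p_\theta}(\abs{V})=\tilde{L}^{p_\theta}(\abs{V}^{p_\theta})$ — the first equality by unitary admissibility, the second by the definition $\tilde{L}^{p_\theta}(W)=L^{p_\theta}(W^{1/p_\theta})$ — identifies the left-hand side with $\tilde{L}^{p_\theta}(W_\theta)$ for the stated $W_\theta$.

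For the derivation, the section $B_\theta$ and the derivation $\Omega_\theta$ recorded above for $(L^{p_0}(W_0),L^{p_1}(W_1))$ are defined on a dense subspace, and transporting them through the same substitution $W_0\mapsto W_0^{1/p_0}$, $W_1\mapsto W_1^{1/p_1}$ — which turns that couple into $(\tilde{L}^{p_0}(W_0),\tilde{L}^{p_1}(W_1))$, whose $\theta$-space is $\tilde{L}^{p_\theta}(W_\theta)$ by the previous step — yields immediately the displayed formula for $\Omega_\theta$. The special cases are elementary simplifications of this formula: when $p_0=p_1$ the coefficient $p_\theta(1/p_1-1/p_0)$ is zero; when $W_0$ and $W_1$ commute a.e.\ so do $W_0^{1/p_0}$ and $W_1^{1/p_1}$, hence $\abs{W_1^{1/p_1}W_0^{-1/p_0}}=W_1^{1/p_1}W_0^{-1/p_0}$ is positive definite and commutes with $W_0^{1/p_0}$, so the conjugation is trivial; imposing both reductions leaves $\Omega_\theta(\vec f)=-\tfrac1p(\log W_1W_0^{-1})\vec f$.

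For the commutator estimates, the differential process of complex interpolation applied to $(\tilde{L}^{p_0}(W_0),\tilde{L}^{p_1}(W_1))$ produces precisely the derivations $\Omega_{\theta,n}$ built from $B_\theta$, so the abstract bounds of Section~\ref{sec:differential_process} for $[\Omega_\theta,T]$ and for the iterated maps $C_m(T)$ apply verbatim, with $C_\theta$ and $C_{\theta,m}$ the universal constants of the complex method (depending only on $\theta$, resp.\ on $\theta$ and $m$). Finally, if $W_j\in A_{p_j}$ then \cite{Cruz-Uribe2018} makes every Calderón--Zygmund operator bounded on $\tilde{L}^{p_0}(W_0)$ and on $\tilde{L}^{p_1}(W_1)$; by the interpolation property and the identification above it is then bounded on $\tilde{L}^{p_\theta}(W_\theta)$, and applying this to the Riesz transforms together with the converse direction of the matrix $A_p$ characterization gives $W_\theta\in A_{p_\theta}$, while the commutator estimate for a Calderón--Zygmund $T$ follows at once from the general bound, both endpoint operator norms being finite.

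The step I expect to be most delicate is confirming the derivation formula: one must check that the displayed $B_\theta$ is a bounded homogeneous section with $B_\theta(\vec f)(\theta)=\vec f$ and then compute $B_\theta(\vec f)'(\theta)$, which amounts to differentiating the operator-valued analytic map $z\mapsto\abs{W_1W_0^{-1}}^{-z}$ — with derivative $-(\log\abs{W_1W_0^{-1}})\,\abs{W_1W_0^{-1}}^{-z}$ — and carrying along the conjugation $W_0^{-1}(\log\abs{W_1W_0^{-1}})W_0$, which does not collapse unless the weights commute; this computation, however, has essentially already been performed in the preceding subsection, so the remaining effort is bookkeeping.
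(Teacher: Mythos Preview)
Your proposal is correct and follows essentially the same path as the paper: the paper does not supply a separate proof for this theorem but presents it as the direct translation of the preceding results under the substitution $W_j\mapsto W_j^{1/p_j}$ (since $\tilde L^{p_j}(W_j)=L^{p_j}(W_j^{1/p_j})$), together with the explicit $B_\theta$ and $\Omega_\theta$ computed immediately before and the abstract commutator bounds of Section~\ref{sec:differential_process}. Your handling of the special cases and of the $A_{p_\theta}$ conclusion via boundedness of Calder\'on--Zygmund operators is likewise in line with how the paper uses these facts in Proposition~\ref{props:power_of_A}.
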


Let us collect a number of consequences now, some of which are presented explicitly and implicitly in the literature. We estate the ones we are aware of: item (a) is \cite[Corollary 2.6]{Bownik}. The first order commutator estimate of item (c) is in the proof of \cite[Theorem 2.4]{Bloom}, while the iterated commutator estimates are in the proof of \cite[Theorem 2.6]{Bloom}. The conclusion that $\log W \in BMO$ in items (c) and (d) is already present in \cite[Theorem 2.5]{Bloom} and \cite[Theorem 6.1]{Bownik}.


\begin{proposition}\label{props:power_of_A} Let $1 \leq p < \infty$, $W \in A_p$, and $T$ be a Calderón-Zygmund operator
\begin{enumerate}[(a)]
    \item $W^{\theta} \in A_p$ for every $\theta \in (0, 1)$.

    \item Let $p = p_0$. Given $1 < p_1 < \infty$, $W^{\frac{1-\theta}{p_0}p_{\theta}} \in A_{p_{\theta}}$ for every $\theta \in (0, 1)$, where $\frac{1}{p_{\theta}} = \frac{1-\theta}{p_0} + \frac{\theta}{p_1}$. In other words, given $\theta \in (0, 1)$, $W^{1 - \varepsilon} \in A_{p_{\theta}}$, where $\varepsilon = 1 - \frac{1-\theta}{p_0} p_{\theta}$.

    \item If $p = 2$ then
    \[
    \|[\log W, T] : L^2 \rightarrow L^2\| \leq C \|T : \Tilde{L}^2(W) \rightarrow \Tilde{L}^2(W)\|
    \]
    where $C$ is a universal constant. In particular, $\log W \in BMO$.

    More generally, we have the higher order commutator estimate
    \[
    \|\{(\log W)^k, T\} : L^2 \rightarrow L^2\| \lesssim_{d, p, k} \|\log W\|_{BMO}^k \|T : \Tilde{L}^2(W) \rightarrow \Tilde{L}^2(W)\|
    \]

    \item For any $\theta \in (0, 1)$ we have
    \[
    \|[\log W, T] : \Tilde{L}^p(W^{1-\theta}) \rightarrow \Tilde{L}^p(W^{1-\theta})\| \leq C_{\theta} \max\{\|T : \Tilde{L}^p \rightarrow \Tilde{L}^p\|, \|T : \Tilde{L}^p(W) \rightarrow \Tilde{L}^p(W)\|\}
    \]
    where $C_{\theta}$ is a constant depending only on $\theta$. In particular, $\log W \in BMO$.

    More generally, we have the higher order commutator estimate
    \[
    \|\{(\log W)^k, T\} : \Tilde{L}^p(W^{1-\theta}) \rightarrow \Tilde{L}^p(W^{1-\theta})\| \lesssim_{d, p, k, \theta} \|\log W\|_{BMO}^k \max\{\|T : \Tilde{L}^p \rightarrow \Tilde{L}^p\|, \|T : \Tilde{L}^p(W) \rightarrow \Tilde{L}^p(W)\|\}
    \]

    \item If $p \neq 2$ and $\theta \in (0, 1)$ then
    \[
    \|[\Omega_{\theta}, T] : L^{p_{\theta}}(W_{\theta}) \rightarrow L^{p_{\theta}}(W_{\theta})\| \leq C_{\theta} \|T : L^p(W) \rightarrow L^p(W)\|
    \]
    where $C_{\theta}$ is a constant depending only on $\theta$, $\frac{1}{p_{\theta}} = \frac{1-2\theta}{p} + \theta$, $W_{\theta} = W^{\frac{1-2\theta}{p}}$ and
    \[
    \Omega_{\theta}(\Vec{f}) = \Big(\frac{p(p-2)}{p(1 + \theta(p-2))} \log \norm{W_{\theta} \frac{\Vec{f}}{\|\Vec{f}\|_{\theta}}} Id - \log W_{\theta} \Big) \Vec{f}
    \]
\end{enumerate}
\end{proposition}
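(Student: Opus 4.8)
The strategy is to apply the preceding theorem (complex interpolation of matrix weighted $\tilde L^p$ spaces together with the commutator estimates coming from its differential process) to suitably chosen couples, and to read off each item by specializing $p_0, p_1, W_0, W_1$ and $\theta$. First, for item (a), I would take $W_0 = \mathrm{Id}$ (the constant weight $I_n$, which lies in $A_p$) and $W_1 = W$, both with $p_0 = p_1 = p$. Since $\mathrm{Id}$ commutes with $W$, the theorem gives $(\tilde L^p, \tilde L^p(W))_\theta = \tilde L^p(W^\theta)$, and since $\tilde L^p = \tilde L^p(\mathrm{Id})$ and $\tilde L^p(W)$ are both $A_p$-weighted spaces on which every Calderón–Zygmund operator is bounded, the interpolation property of the complex method forces boundedness of $T$ on $\tilde L^p(W^\theta)$; by the characterization of $A_p$ via boundedness of (enough) Calderón–Zygmund operators — or more directly by the well-known fact that $W\in A_p\Rightarrow W^\theta\in A_p$, which is what is being re-derived here — we conclude $W^\theta\in A_p$. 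For item (b) I would instead keep $W_0=\mathrm{Id}\in A_{p_0}$, $W_1=W\in A_{p_1}$, but now with genuinely different exponents $p_0\ne p_1$; the theorem yields $(\tilde L^{p_0},\tilde L^{p_1}(W))_\theta=\tilde L^{p_\theta}(W_\theta)$ with $W_\theta = W^{\frac{1-\theta}{p_0}p_\theta}$ in the commuting case (here one of the weights is the identity, so the commuting formula applies), and the "in particular" membership statement $W_\theta\in A_{p_\theta}$ from the theorem gives exactly $W^{\frac{1-\theta}{p_0}p_\theta}\in A_{p_\theta}$; the reformulation with $\varepsilon = 1-\frac{1-\theta}{p_0}p_\theta$ is just algebra.

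For items (c) and (d), the point is to run the differential process. In item (c), take $p=2$, $p_0=p_1=2$, $W_0 = W^{-1/2}$ and $W_1 = W^{1/2}$ — or equivalently arrange that $\tilde L^{2}(W_0)$ and $\tilde L^2(W_1)$ are $A_2$-weighted $L^2$ spaces with $\theta=1/2$ landing on the unweighted $L^2$. In the commuting-with-$p_0=p_1=p$ case the theorem records $\Omega_\theta(\vec f) = -\frac1p(\log W_1W_0^{-1})\vec f$; choosing the weights so that $W_1W_0^{-1}$ is (a power of) $W$ and $p=2$, $\theta=1/2$, this is multiplication by a scalar multiple of $\log W$, and the commutator estimate from the differential process gives $\|[\log W,T]:L^2\to L^2\|\lesssim \|T:\tilde L^2(W)\to\tilde L^2(W)\|$; the iterated version comes from the maps $C_m(T)$, which for this couple equal $\{(\log W)^k,T\}$ up to a constant (as recorded in Section \ref{sec:differential_process} for the commuting scalar case, and the matrix commuting case is formally identical since $\log W$ is a fixed matrix multiplier). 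That $\log W\in BMO$ then follows because $[\log W, T]$ bounded on $L^2$ for (enough) Calderón–Zygmund operators $T$ is a known characterization of $BMO$ (the matrix Coifman–Rochberg–Weiss converse). Item (d) is the same mechanism but with $W_0=\mathrm{Id}$, $W_1=W$, general $p$, and a free parameter $\theta$: then $W_\theta = W^{1-\theta}$ (commuting case with $p_0=p_1=p$ gives $W_0^{1-\theta}W_1^\theta = W^\theta$, so one should rather use $W_0=W, W_1=\mathrm{Id}$ to land on $W^{1-\theta}$), $\Omega_\theta$ is multiplication by $-\frac1p\log(W_1W_0^{-1}) = \frac1p\log W$ up to sign, and the commutator/iterated-commutator bounds transfer verbatim, with the right-hand side being the max of the operator norms on $\tilde L^p$ and $\tilde L^p(W)$.

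For item (e) one takes $p\ne 2$ and uses the genuinely non-isometric situation: $W_0=\mathrm{Id}$ with exponent $p_0=p$, $W_1=W$ with exponent $p_1$, but now one must choose $p_1$ so that the resulting $W_\theta$ and the formula for $\Omega_\theta$ match the claimed $\frac{1}{p_\theta}=\frac{1-2\theta}{p}+\theta$, $W_\theta=W^{\frac{1-2\theta}{p}}$. Reading the general $\Omega_\theta$ from the theorem in the commuting case and substituting these choices should produce exactly the displayed $\Omega_\theta(\vec f) = \big(\frac{p(p-2)}{p(1+\theta(p-2))}\log\|W_\theta \vec f/\|\vec f\|_\theta\| \,\mathrm{Id} - \log W_\theta\big)\vec f$ after simplifying the coefficient $p_\theta(\frac1{p_1}-\frac1{p_0})$; then the commutator estimate is the generic one from the differential process, with the right-hand side collapsing to a single term $\|T:L^p(W)\to L^p(W)\|$ because one of the endpoint spaces is the unweighted $L^p$ on which every Calderón–Zygmund operator is automatically bounded (so its norm can be absorbed into the constant).

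The main obstacle I anticipate is purely bookkeeping: matching, in items (c)–(e), the abstract formula for $\Omega_\theta$ coming from the section-function $B_\theta$ in the theorem with the concrete scalar-times-identity (or $\log W$) multipliers claimed, and in particular verifying that the coefficient $p_\theta(\frac1{p_1}-\frac1{p_0})$ specializes correctly and that the nonlinear term $\log\|W_\theta\vec f/\|\vec f\|_\theta\|\,\mathrm{Id}$ genuinely drops out when $p_0=p_1$ (so that $\Omega_\theta$ becomes linear, namely multiplication by a fixed matrix) — this is what makes the classical scalar identity $C_m(T)=\{(\log W_1W_0^{-1})^m,T\}$ carry over and yield the clean iterated-commutator statements in (c) and (d). The appeal to the converse direction ("$[\log W,T]$ bounded $\Rightarrow \log W\in BMO$") should be attributed to the matrix Coifman–Rochberg–Weiss theory rather than proved here, and the membership claims in (a), (b) should be cross-referenced to the "$W_\theta\in A_{p_\theta}$" conclusion of the theorem above rather than re-proved.
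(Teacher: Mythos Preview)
Your overall strategy---specialize the preceding theorem to well-chosen endpoints and read off the derivation---is exactly the paper's, and items (a), (c), (d) are essentially correct (for (c) the paper takes $W_0=W$, $W_1=W^{-1}$ at $p_0=p_1=2$, $\theta=\tfrac12$, rather than $W^{\pm 1/2}$, which is immaterial; for (d) you correctly self-correct to $W_0=W$, $W_1=\mathrm{Id}$). Two items have genuine slips.

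For (b) you put $W$ at the $p_1$ endpoint, writing ``$W_1=W\in A_{p_1}$''. But the hypothesis gives only $W\in A_p=A_{p_0}$, so this membership is unjustified when $p_1<p_0$; and with your assignment the interpolated weight is $W^{\theta p_\theta/p_1}$, not the claimed $W^{(1-\theta)p_\theta/p_0}$. The fix (and the paper's choice) is the swap: interpolate $\tilde L^{p_0}(W)$ and $\tilde L^{p_1}$, i.e.\ $W_0=W$ at exponent $p_0$, $W_1=\mathrm{Id}$ at exponent $p_1$.

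For (e) the plan ``$W_0=\mathrm{Id}$ at $p_0=p$, $W_1=W$ at some $p_1$ to be determined'' cannot produce the stated right-hand side: your endpoints are unweighted $L^p$ and $L^{p_1}(W)$, so $L^p(W)$ never appears and there is no reason $T$ is bounded on $L^{p_1}(W)$ (we know nothing about $W$ at exponent $p_1$). The paper instead takes $p_0=p$, $p_1=q=p'$ the conjugate exponent, $W_0=W$, $W_1=W^{-q/p}$, using the duality $W\in A_p\Leftrightarrow W^{-q/p}\in A_q$ so that both endpoints are legitimate $A$-weighted spaces on which $T$ is bounded; then $\tfrac{1}{p_\theta}=\tfrac{1-\theta}{p}+\tfrac{\theta}{q}=\tfrac{1-2\theta}{p}+\theta$ and the displayed $\Omega_\theta$ drop out of the general commuting-case formula. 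In particular, neither endpoint is unweighted, contrary to your final sentence.
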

\begin{proof}
(a) Interpolate $L^p$ and $L^p(W)$.

(b) Interpolate $L^{p_0}(W)$ and $L^{p_1}$.

(c) Take $W_0 = W$, $W_1 = W^{-1}$, $p_0 = p_1 = 2$ and $\theta = \frac{1}{2}$, and apply the commutator theorem.

(d) Take $W_0 = W$, $W_1 = Id$, $p_0 = p_1 = p$ and apply the commutator theorem.

(e) Take $W_0 = W$, $W_1 = W^{-\frac{q}{p}}$, $p_0 = p$, $p_1 = q$, where $q$ is the conjugate of $p$, and apply the commutator theorem.
\end{proof}

As mentioned, the iterated commutators result (d) is hidden in plain view in the proof of Theorem 2.6 of \cite{Bloom}. Indeed, in that paper Bloom shows that if $n > 1$
\[
BMO(d, n) \supset \{\alpha \log W : \alpha \geq 0, W \in A_p\}
\]
and the inclusion is proper, in contrast with the scalar case. The proof passes through using that if $\log W \in BMO$ then by interpolation the higher order commutator estimates must be bounded, and then arguing as in the following example:

\begin{example}
Let
\[
B = \begin{pmatrix}
b_1 & b_2\\
b_2 & 0
\end{pmatrix}
\]
where $b_1, b_2 \in BMO(\R, \R)$. Suppose that $[B, [B, T]]$ is bounded on $L^p$ for every Calderón-Zygmund operator $T$. We have
\[
[B, [B, T]] = \begin{pmatrix}
[b_1, [b_1, T]] + [b_2, [b_2, T]] & b_1 b_2 T - 2 b_1 T b_2 + T b_1 b_2\\
b_1 b_2 T - 2 b_2 T b_1 + T b_1 b_2 & [b_2, [b_2, T]]
\end{pmatrix}
\]
It follows that $b_1 T b_2 - b_2 T b_1$ is bounded on $L^p$ for any $b_1, b_2 \in BMO(\R, \R)$ and any Calderón-Zygmund operator. Take $b_1 = \chi_{(0, 1)}$ and $b_2 = \log \abs{x}$. Then $b_1 [T, b_2]$ and $[T, b_2] b_1$ are bounded on $L^p$, while
\[
b_1 T b_2 - b_2 T b_1 - [b_1 b_2, T] = [T, b_2] b_1 + b_1 [T, b_2]
\]
Therefore $[b_1 b_2, T]$ is bounded on $L^p$. It follows that $b_1 b_2 \in BMO(\R, \R)$, a contradiction.
\end{example}

See also \cite[Theorem 6.2]{Bownik}. The previous discussion suggests the following result:

\begin{theorem}\label{thm:charact}
Let $BMO_C(d, n)$ be the set of self-adjoint elements $B \in BMO(d, n)$ for which
\begin{equation}\label{eq:BMO_C}
\Big\|\frac{\{(B)^n, H\}}{n!} : L^2 \rightarrow L^2 \Big\| \lesssim C^n
\end{equation}
for some $C$, where $H$ is the Hilbert transform. Then
\[
BMO_C(d, n) = \{\alpha \log W : \alpha \geq 0, W \in A_2\}
\]
\end{theorem}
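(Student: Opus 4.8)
The plan is to prove the two inclusions separately, with the inclusion $\{\alpha \log W : \alpha \geq 0, W \in A_2\} \subseteq BMO_C(d,n)$ being essentially a consequence of the machinery already developed in the paper, and the reverse inclusion being the delicate part. For the easy direction: given $W \in A_2$ and $\alpha \geq 0$, set $W_0 = e^{\alpha \log W} = W^{\alpha}$ and $W_1 = W^{-\alpha}$; after rescaling we may assume $\alpha$ is small enough that both $W_0$ and $W_1$ lie in $A_2$ (using Proposition \ref{props:power_of_A}(a) together with the standard fact that small powers of scalar $A_2$ weights attached to the eigenvalues stay in $A_2$, and that $W \in A_2 \iff W^{-1} \in A_2$). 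Then $\abs{W_1 W_0^{-1}} = \abs{W^{-2\alpha}}$ has $\log$ equal to $-2\alpha \log W$, so with $p_0 = p_1 = 2$ and $\theta = \tfrac12$ the differential process of complex interpolation (Section \ref{sec:differential_process}, matrix $L^p$ case with $p_0 = p_1$) gives $\Omega_{1/2}(\vec f) = \alpha (\log W)\vec f$, and the iterated commutator bound $\|C_n(T)\| \leq C_{n}\max\{\|T : \tilde L^2(W_0)\to\tilde L^2(W_0)\|, \|T : \tilde L^2(W_1)\to\tilde L^2(W_1)\|\}$ applied to $T = H$ yields \eqref{eq:BMO_C} with $C$ proportional to $\alpha[W]_{A_2}^{1/2}$-type quantities; one then unwinds $C_n(H) = \{(\alpha\log W)^n, H\}$ up to the combinatorial constant $n!$ as recorded after \eqref{eq:Cn(T)}.

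For the hard direction, suppose $B$ is self-adjoint with \eqref{eq:BMO_C} holding for all $n$. The first step is to extract $B \in BMO(d,n)$ and deduce (from the $n=1$ case, via Coifman--Rochberg--Weiss applied entrywise, or directly) that each entry of $B$ is in scalar $BMO$. The second and central step is to run the ``Bloom argument'' in reverse: one wants to show that $B$ must actually be a nonnegative multiple of $\log W$ for some $W \in A_2$, equivalently that $e^{\beta B} \in A_2$ for some $\beta > 0$. The strategy is to use the hypothesis that \emph{all} orders $n$ are controlled, not just finitely many, to run the complex-interpolation construction backwards: the bound \eqref{eq:BMO_C} for all $n$ with a single constant $C$ is exactly the condition that the formal power series $\sum_n \frac{\{(B)^n, H\}}{n!} z^n$ converges, and by the theory connecting derivations to analytic families (the ``$\Omega$ comes from an analytic function'' principle used throughout the paper and in \cite{Rochberg_Weiss_Derivatives, CKMR}) this forces $B$ to be the derivation at $\theta = \tfrac12$ of a genuine analytic family of weighted $L^2$ norms, i.e.\ $B = \alpha W_0^{-1}(\log\abs{W_1W_0^{-1}})W_0$ for matrix weights $W_0, W_1 \in A_2$; specializing as in the easy direction collapses this to $\alpha \log W$.

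The main obstacle will be making the reverse ``$\Omega \Rightarrow$ analytic family'' step rigorous in the \emph{matrix} setting. In the scalar case the analogous statement (a $BMO$ function whose all iterated commutators with $H$ are controlled by $C^n$ is a multiple of $\log w$ for $w \in A_p$) is not automatic either, and one typically uses the exponential integrability of $BMO$ (John--Nirenberg) to pass from commutator bounds to an $A_2$ condition; the matrix analogue of John--Nirenberg is more subtle and the ``multiply out'' trick in the displayed Example above only shows that certain off-diagonal combinations are bounded, which in the $2\times 2$ case was enough to force a scalar product of $BMO$ functions into $BMO$ — a contradiction unless the structure is special. So I expect the real work is: (i) show the $C^n$-control of all iterated commutators implies, via the polarization-type identity $b_1 T b_2 - b_2 T b_1 = [b_1b_2, T] - b_1[T,b_2] - [T,b_2]b_1$ iterated to all orders, that every ``product-type'' operator built from the entries of $B$ is bounded; (ii) deduce that the commuting pieces of $B$ behave like honest scalar $\log w$'s with $w \in A_2$ (here one inverts the scalar theory) while the genuinely non-commuting pieces are forced to vanish or to be skew-Hermitian (and hence, being self-adjoint, zero); (iii) reassemble to get $B = \alpha \log W$. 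Step (ii) is where I anticipate needing a genuinely new matrix-weighted estimate rather than a citation.
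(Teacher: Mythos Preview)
Your easy direction is morally correct but hides the only nontrivial point: to land in $BMO_C$ you need the \emph{exponential} bound $\|\{(B)^n,H\}\|/n!\lesssim C^n$, and the abstract higher-order commutator estimate only gives $\|C_n(H)\|\le C_{\theta,n}\max(\cdots)$ with unspecified constants $C_{\theta,n}$. The paper bypasses this by writing down the analytic family
\[
F_f(z)=W^{\frac{-2z+1}{2}}\,H\,W^{\frac{2z-1}{2}}f
\]
directly, observing that it is bounded on the strip by $\max\{\|H:\tilde L^2(W)\to\tilde L^2(W)\|,\|H:\tilde L^2(W^{-1})\to\tilde L^2(W^{-1})\|\}$, and applying Cauchy's integral formula on a circle of radius approaching $1/2$ about $z=1/2$ to get $\|F_f^{(n)}(1/2)\|/n!\lesssim 2^n$. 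Since $F_f^{(n)}(1/2)=\{H,(\log W)^n\}f$, this is exactly \eqref{eq:BMO_C}. Your route would work once you unwind the constants, but that unwinding \emph{is} the Cauchy argument.

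For the hard direction you have the right opening move---form the power series $\sum_n\frac{1}{n!}\{H,(B)^n\}(z-\tfrac12)^n f$ and note it has radius of convergence at least $1/C$---but you then miss the key observation and instead propose a long entrywise program (your steps (i)--(iii)) that is both unnecessary and unlikely to close. The point you are missing is that this power series is \emph{literally} the Taylor expansion at $z=\tfrac12$ of $e^{-(z-1/2)B}He^{(z-1/2)B}f$, because differentiating $t\mapsto e^{-tB}He^{tB}$ repeatedly produces exactly the iterated commutators $\{H,(B)^n\}$. Hence, writing $W=e^B$, the series agrees with $W^{-(z-1/2)}HW^{z-1/2}f$ on its disk of convergence. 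Evaluating at any real $z=\tfrac12+\beta$ with $0<\beta<1/C$ gives
\[
\|W^{-\beta}HW^{\beta}f\|_{L^2}\lesssim \|f\|_{L^2},
\]
i.e.\ $H$ is bounded on $\tilde L^2(W^{2\beta})$, which forces $W^{2\beta}\in A_2$ and thus $B=\tfrac{1}{2\beta}\log(W^{2\beta})$. No polarization identities, no entrywise reduction, and no ``non-commuting pieces must vanish'' argument are needed; the whole hard direction is this single identification of the summed series with a conjugation of $H$.
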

\begin{proof}
The proof uses an idea contained in \cite[Theorem 2.6]{Bloom} (the idea is also present in \cite{Goldberg2003matrix}). Start with $W \in A_2$, and given a function $f \in L^2$ let
\begin{equation}\label{eq:Ff}
F_f(z) = W^{\frac{-2z + 1}{2}} H W^{\frac{2z - 1}{2}} f
\end{equation}
For $f$ in a dense subspace of $L^2$ the function $F_f : \bbS \rightarrow L^2$ is analytic on the interior of $\bbS$, $F_f(1/2) = Hf$ and
\[
\|F_f(z)\|_{L^2} \leq \max\{\|H : \tilde{L}^2(W) \rightarrow \tilde{L}^2(W)\|, \|H : \tilde{L}^2(W^{-1}) \rightarrow \tilde{L}^2(W^{-1})\|\}
\]
Let $\gamma$ be a circle centered in $\frac{1}{2}$, contained in the interior of $\bbS$ and positively oriented. By Cauchy's Integral Formula, letting the radius of $\gamma$ approach $1/2$, we obtain
\[
\Big\|\frac{F_f^{(n)}(1/2)}{n!}\Big\|_{L^2} = \frac{1}{2\pi}\Big\|\oint_{\gamma} \frac{F_f(z)}{(z - \frac{1}{2})^{n+1}} dz\Big\|_{L_2} \lesssim 2^n
\]
But $F_f^{(n)}(1/2) = \{H, (\log W)^n\}$, and that proves one inclusion. For the other, suppose $B$ satisfies \eqref{eq:BMO_C} and
for $f \in L^2$ let
\[
F_f(z) = Hf + \sum\limits_{n=1}^{\infty} \frac{1}{n!} \{H, (B)^n\} \Big(z - \frac{1}{2}\Big)^n f
\]
The root test shows that $F_f$ has radius of convergence of at least $1/C$, and that $\|F_f(z)\|_{L^2} \leq D \|f\|_{L^2}$ for some $D$ and $\left|z - \frac{1}{2}\right| < 1/(2C)$ (we could have taken any value strictly smaller than $1/C$ instead of $1/(2C)$). Notice that $F_f$ must agree with \eqref{eq:Ff} for $W = e^B$, and therefore if we take $z = 1/2 + 1/(4C)$ we obtain that $H$ is bounded on $\tilde{L}^2(W^{1/(2C)})$, so that $B = 2 C \log W'$, where $W' \in A_2$. 
\end{proof}

\subsection{Commutator estimates obtained through real interpolation}
If we have a couple $(L^{p_0}(w_0), L^{p_1}(w_1))$ of scalar weighted $L^p$ spaces, let us denote by $\Omega_{K, w_0, w_1}^{(n)}$ and $\Omega_{E_{\alpha}, w_0, w_1}^{(n)}$ the corresponding derivations of the real method.

\begin{theorem}
Let $1 \leq p_0 \leq p_1 < \infty$ and let $W_0, W_1$ be matrix weights. Let $\left|W_1 W_0^{-1}\right| = U D U^{-1}$, where $U$ is an unitary matrix weight and $D = \mbox{diag}(w_j)_{j=1}^n$. Let $1 \leq q \leq \infty$ and $0 < \theta < 1$. If $p_0 < p_1$ let $\alpha = \frac{1}{p_1 - p_0}$, otherwise let $\alpha = 1$. Define
\[
D_{\alpha}^{(n)} = \mbox{diag} (\Omega_{E_{\alpha}, 1, w_j}^{(n)}) \;\;\;\;\;\;\;\;\; D_{K}^{(n)} = \mbox{diag} (\Omega_{K, 1, w_j}^{(n)})
\]
Then the derivation of $(L^{p_0}(W_0), L^{p_1}(W_1))$ by the $E$-method and by the $K$-method are, respectively,
\[
\Omega_{E_{\alpha}}^{(n)} = T_{W_0^{-1} U} \circ D_{\alpha}^{(n)} \circ T_{U^{-1} W_0} \;\;\;\;\;\;\;\;\; \Omega_{K}^{(n)} = T_{W_0^{-1} U} \circ D_{K}^{(n)} \circ T_{U^{-1} W_0}
\]
In particular, if $p_0 \leq 2 \leq p_1$, $\frac{1}{p_0} + \frac{1}{p_1} = 1$, $W_1 = W_0^{-1}$, $W_0$ is an $A_{p_0}$ matrix weight, and we replace $W_0$ and $W_1$ by $W_0^{\frac{1}{p_0}}$ and $W_1^{\frac{1}{p_1}}$, respectively, and $C_n(T)$ is given by \eqref{eq:Cn(T)}, then
\[
C_n(T) : L^2 \rightarrow L^2
\]
is bounded for every Calderón-Zygmund operator $T$.
\end{theorem}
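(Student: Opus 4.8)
The plan is to transport the whole differential process through the isometric change of couple used in the proof of Theorem~\ref{thm:general_interpolation}, thereby reducing to the diagonal couple $(L^{p_0},L^{p_1}(D))$, and then to plug the resulting commutator estimate into the commutator theorem of Section~\ref{sec:differential_process} for a suitably chosen real interpolation functor. The first ingredient is that the differential process is \emph{natural} under isometric isomorphisms of couples. Recall from the proof of Theorem~\ref{thm:general_interpolation} that $S=T_{U^{-1}W_0}$ is an isometric isomorphism of compatible couples from $(L^{p_0}(W_0),L^{p_1}(W_1))$ onto $(L^{p_0},L^{p_1}(D))$, with inverse $S^{-1}=T_{W_0^{-1}U}$. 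If $D_\bullet(t)$ is a selector for $(L^{p_0},L^{p_1}(D))$ with respect to $K$, $E_1$ or $E_\alpha$, then $a\mapsto S^{-1}D_\bullet(t)(Sa)$ is a selector for $(L^{p_0}(W_0),L^{p_1}(W_1))$, because $S$ preserves the norms of $X_0$, of $X_1$ and of $X_0+X_1$; substituting it into the integral formulas for the derivations in Section~\ref{sec:differential_process} and using $\mathrm{Id}-S^{-1}D_\bullet(t)S=S^{-1}(\mathrm{Id}-D_\bullet(t))S$ gives $\Omega^{(n)}_{(L^{p_0}(W_0),L^{p_1}(W_1))}=S^{-1}\circ\Omega^{(n)}_{(L^{p_0},L^{p_1}(D))}\circ S=T_{W_0^{-1}U}\circ\Omega^{(n)}_{(L^{p_0},L^{p_1}(D))}\circ T_{U^{-1}W_0}$. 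Thus it suffices to identify the derivations of the diagonal couple as $D_K^{(n)}$ and $D_\alpha^{(n)}$.

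Since $D=\mathrm{diag}(w_j)$ acts coordinate by coordinate, the $K$- and $E_\alpha$-functionals of $(L^{p_0},L^{p_1}(D))$ are governed coordinate-wise by those of the scalar couples $(L^{p_0},L^{p_1}(w_j))$; this is exactly the content of the coordinate-wise description of $\Ffrak(L^{p_0},L^{p_1}(D))$ produced by combining Theorems~\ref{thm:general_interpolation} and~\ref{thm:real_interpolation_classical_Lp}, which exhibits the interpolation space as a direct sum over the coordinates. One may therefore take the selectors $D_K(t)$ and $D_{E_\alpha}(t)$ to act diagonally, with $j$-th block an almost optimal selector for $(L^{p_0},L^{p_1}(w_j))$; the defining integrals for the derivations then act diagonally too, and on the $j$-th coordinate reproduce precisely the scalar derivations $\Omega^{(n)}_{K,1,w_j}$ and $\Omega^{(n)}_{E_\alpha,1,w_j}$ recorded in Section~\ref{sec:differential_process}. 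Hence $\Omega^{(n)}_{(L^{p_0},L^{p_1}(D))}$ equals $D_K^{(n)}$, respectively $D_\alpha^{(n)}$, which together with the naturality above proves the two displayed identities. I expect the main obstacle to be the rigorous justification that the selectors for the \emph{vector} couple may be taken \emph{diagonal} up to a factor $1+\epsilon$, even though the Euclidean norm on $\C^n$ couples the coordinates in the endpoint norms: one cannot argue abstractly here and has to go through the explicit scalar description of Theorem~\ref{thm:real_interpolation_classical_Lp}, where the interpolation space is literally a coordinate-wise direct sum, carrying out the estimate first on simple functions and then passing to the limit.

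For the last assertion, after the normalizations in the statement the couple takes the form $\overline X=(L^{p_0}(V),L^{p_1}(V^{-1}))$ with $V=W_0^{1/p_0}$, so that its endpoints are $L^{p_0}(V)=\tilde L^{p_0}(W_0)$ and $L^{p_1}(V^{-1})=\tilde L^{p_1}(W_0^{-p_1/p_0})$. Since $W_0\in A_{p_0}$ and $\frac1{p_0}+\frac1{p_1}=1$, matrix weight duality ($W\in A_p$ if and only if $W^{-p'/p}\in A_{p'}$) gives $W_0^{-p_1/p_0}\in A_{p_1}$, so every Calder\'on--Zygmund operator $T$ is bounded on both endpoints of $\overline X$, that is, $T:\overline X\to\overline X$. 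Let $\Ffrak$ be the real method with $\theta=\tfrac12$ and $q=2$; then $\tfrac1q=\tfrac12=\tfrac1{p_\theta}$, which is exactly the case in which the real and complex interpolation spaces coincide, and the interpolation formula for matrix weighted $L^p$ spaces proved above gives
\[
\Ffrak(\overline X)=L^{2}\bigl(\abs{V^{-1}V^{-1}}^{1/2}V\bigr)=L^{2}(V^{-1}V)=L^{2}.
\]
By the first two steps the maps $\Omega^{(n)}$ attached to $\overline X$ (through the $E$-method, or through the $K$-method) are given by the displayed formulas, so the operator $C_n(T)$ of~\eqref{eq:Cn(T)} is precisely the one furnished by the differential process for $\overline X$ and $\Ffrak$; the commutator theorem of Section~\ref{sec:differential_process}, which holds for the real method, then yields
\[
\norm{C_n(T):L^2\to L^2}\le C_n\max\bigl\{\norm{T:\tilde L^{p_0}(W_0)\to\tilde L^{p_0}(W_0)},\ \norm{T:\tilde L^{p_1}(W_0^{-p_1/p_0})\to\tilde L^{p_1}(W_0^{-p_1/p_0})}\bigr\}<\infty,
\]
which is the desired boundedness.
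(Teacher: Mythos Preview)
Your argument for the displayed formulas is exactly the paper's: transport the selector through the isometric equivalence $S=T_{U^{-1}W_0}$ and observe that the diagonal weight $D$ lets you choose the selector for $(L^{p_0},L^{p_1}(D))$ coordinate\-wise. The paper compresses this into the single sentence ``if $D^{\overline Y}$ is a selector for $\overline Y$ then $T^{-1}D^{\overline Y}T$ is a selector for $\overline X$''; your extra remarks on why diagonal selectors are admissible (up to constants depending only on $n$, since the $\ell^2$ and $\ell^\infty$ norms on $\C^n$ are comparable) are a welcome clarification and are consistent with the fact that derivations are only determined up to bounded perturbations.

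In the ``in particular'' part there is a slip in reading the hypotheses. The statement fixes $W_1=W_0^{-1}$ \emph{before} the replacement, so after replacing one gets the couple
\[
\bigl(L^{p_0}(W_0^{1/p_0}),\,L^{p_1}(W_0^{-1/p_1})\bigr),
\]
not $(L^{p_0}(V),L^{p_1}(V^{-1}))$ with $V=W_0^{1/p_0}$; these coincide only when $p_0=p_1=2$. Two consequences: (i) the second endpoint is $\tilde L^{p_1}(W_0^{-1})$, not $\tilde L^{p_1}(W_0^{-p_1/p_0})$ --- boundedness of $T$ there still holds, but now via duality \emph{plus} Proposition~\ref{props:power_of_A}(a), since $W_0^{-1}=(W_0^{-p_1/p_0})^{p_0/p_1}$ with $p_0/p_1\le 1$; and (ii) at $\theta=\tfrac12$, $q=2$ the interpolated weight is $|W_0^{-1}|^{1/2}W_0^{1/p_0}=W_0^{1/p_0-1/2}$, so the interpolation space is $L^2(W_0^{1/p_0-1/2})$, which equals $L^2$ only when $p_0=2$. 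Thus your computation $|V^{-1}V^{-1}|^{1/2}V=\mathrm{Id}$ does not apply under the literal reading, and the conclusion ``$C_n(T):L^2\to L^2$'' does not follow from the commutator theorem as you wrote it except in the case $p_0=p_1=2$. (Your alternative reading, with second weight $V^{-1}$, does produce $L^2$ at the midpoint and uses only $A_p$-duality at the endpoints; it may well be the intended normalization, but it is not what the sentence says.)
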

\begin{proof}
Follows from noticing that if the couples $(X_0, X_1)$ and $(Y_0, Y_1)$ are equivalent (i.e., there is $T : \overline{X} \rightarrow \overline{Y}$ which restricts to an isomorphism from $X_j$ onto $Y_j$, $j = 0, 1$) and $D^{\overline{Y}}$ is a selector for $(Y_0, Y_1)$, then $T^{-1} \circ D^{\overline{Y}} \circ T$ is a selector for $(X_0, X_1)$. 
\end{proof}


\bibliographystyle{amsplain}
\bibliography{refs}

\end{document}